\newtheorem{theorem}{Theorem}[section]
\newtheorem{proposition}[theorem]{Proposition}
\newtheorem{corollary}[theorem]{Corollary}
\newtheorem{remark}[theorem]{Remark}
\numberwithin{equation}{section}
\newcommand{\ZZ}{\mathbb{Z}}
\begin{document}

\baselineskip=16pt

\title[Equivariant bundles and logarithmic connections on toric
varieties]{Equivariant vector bundles and logarithmic connections
on toric varieties}

\author[I. Biswas]{Indranil Biswas}
\address{School of Mathematics, Tata Institute of Fundamental
Research, Homi Bhabha Road, Bombay 400005, India}
\email{indranil@math.tifr.res.in}

\author[V. Mu\~{n}oz]{Vicente Mu\~{n}oz}
\address{Facultad de Ciencias Matem\'aticas, Universidad
Complutense de Madrid, Plaza Ciencias 3, 28040 Madrid, Spain}
\email{vicente.munoz@mat.ucm.es}

\author[J. S\'anchez]{Jonathan S\'anchez}
\address{Facultad de Ciencias Matem\'aticas, Universidad
Complutense de Madrid, Plaza Ciencias 3, 28040 Madrid, Spain}
\email{jnsanchez@mat.ucm.es}

\subjclass[2000]{14M25, 14F05}

\keywords{Toric variety, equivariant bundle, logarithmic connection,
$G$-pair}

\thanks{Partially supported by Spanish MICINN grant MTM2010-17389.}

\date{30 December, 2013}

\begin{abstract}
Let $X$ be a smooth complete complex toric variety such that the boundary
is a simple normal crossing divisor, and let $E$ be a holomorphic
vector bundle on $X$.
We prove that the following three statements are equivalent:
\begin{itemize}
\item The holomorphic vector bundle $E$ admits an equivariant structure.

\item The holomorphic vector bundle $E$ admits an integrable logarithmic
connection singular over $D$.

\item The holomorphic vector bundle $E$ admits a logarithmic connection
singular over $D$.
\end{itemize}
We show that an equivariant vector bundle on $X$ has a tautological
integrable logarithmic connection singular over $D$.
This is used in computing the Chern classes of the equivariant vector
bundles on $X$. We also prove a version of the above result for holomorphic
vector bundles on log parallelizable $G$-pairs $(X,
D)$, where $G$ is a simply connected complex affine algebraic group.
\end{abstract}

\maketitle

\section{Introduction} \label{sec:1}

Let $X$ be a smooth complete variety over $\mathbb C$. A reduced effective
divisor $D$ on $X$ is called \textit{locally simple normal crossing} if
around every point $x\,\in\, D$ there are holomorphic coordinate functions
$\{z_1\, ,\ldots\, , z_d\}$ on some analytic open subset $U_x\, \subset\, X$
containing $x$ such that $x=(0,\ldots, 0)$ and
$$
D\cap U_x\, =\, \{(z_1\, ,\cdots\, , z_d)\, \mid\, \, \prod_{i=1}^{d'} z_i\,=\, 0\}\cap U_x
$$
for some $d'\,\leq\, d$. A locally simple normal crossing divisor is called
a \textit{simple normal crossing} divisor if each irreducible component of
it is smooth.

\medskip

Now take $X$ to be a smooth complete complex toric variety. Let
$\mathcal T$ denote
the complex torus that acts on $X$ defining its toric structure.
The dimension of $X$ will be denoted by $d$.
Let ${\mathcal U}\, \subset\,X$ be the nonempty
Zariski open subset on which the action of $\mathcal T$ is free.

Then 
\begin{itemize}
\item the complement 
$$
D\, :=\, X\setminus \mathcal U
$$ 
is a simple normal crossing divisor, and

\item the logarithmic tangent bundle $TX(-\log D)$ is holomorphically
trivial.
\end{itemize}
See \cite[p. 473--474, Main Theorem]{Wi}.

An equivariant holomorphic vector bundle on $X$ is a holomorphic vector bundle
$E\, \longrightarrow\, X$ equipped with a holomorphic lift of the action
of $\mathcal T$.

In Section \ref{sec:2} we prove our first main result (see Theorem
\ref{thm1-1}):

\begin{theorem}\label{thm1}
Let $E$ be a holomorphic vector bundle over the toric variety $X$. The following
three statements are equivalent:
\begin{enumerate}
\item The holomorphic vector bundle $E$ admits an equivariant structure.

\item The holomorphic vector bundle $E$ admits an integrable logarithmic
connection singular over $D$.

\item The holomorphic vector bundle $E$ admits a logarithmic connection
singular over $D$.
\end{enumerate}
\end{theorem}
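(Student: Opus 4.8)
The plan is to prove the cycle of implications $(1)\Rightarrow(2)\Rightarrow(3)\Rightarrow(1)$. The implication $(2)\Rightarrow(3)$ is immediate, since an integrable logarithmic connection is in particular a logarithmic connection, so the whole content lies in $(1)\Rightarrow(2)$ and in the converse $(3)\Rightarrow(1)$.

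For $(1)\Rightarrow(2)$ I would build the tautological connection directly from the lifted action. Writing $\mathfrak{t}$ for the Lie algebra of $\mathcal{T}$, each $v\in\mathfrak{t}$ produces a fundamental vector field $\widetilde{v}$ on $X$ which is tangent to $D$, and by the cited theorem of Wi the assignment $v\mapsto\widetilde{v}$ identifies $\mathcal{O}_X\otimes_{\CC}\mathfrak{t}$ with the trivial bundle $TX(-\log D)$. Differentiating the lift of the $\mathcal{T}$-action supplied by $(1)$ yields, for each $v$, a first order operator $\mathcal{L}_v$ on $E$ whose symbol is $\widetilde{v}$; the assignment $v\mapsto\mathcal{L}_v$ is $\CC$-linear and, because the lift is an action, a homomorphism of Lie algebras. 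I would then define $\nabla$ by setting $\nabla_{\widetilde{v}}=\mathcal{L}_v$ on the global frame $\{\widetilde{v}\}$ and extending $\mathcal{O}_X$-linearly in the vector field and by Leibniz in the section; this is a well defined logarithmic connection singular over $D$. Its curvature is $F(\widetilde{u},\widetilde{v})=[\mathcal{L}_u,\mathcal{L}_v]-\mathcal{L}_{[u,v]}$, which vanishes because $\mathfrak{t}$ is abelian and $v\mapsto\mathcal{L}_v$ is a Lie algebra homomorphism. Hence $\nabla$ is integrable, giving $(2)$.

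The hard direction is $(3)\Rightarrow(1)$, and here I would argue cohomologically rather than by integrating the given connection: a general (even integrable) logarithmic connection has residues that need not be integral, so it cannot itself arise from a torus action, and only its existence can be exploited. Consider the logarithmic Atiyah sequence
\[
0\longrightarrow \mathcal{E}nd(E)\longrightarrow \mathrm{At}_D(E)\longrightarrow TX(-\log D)\longrightarrow 0 ,
\]
whose $\mathcal{O}_X$-linear splittings are exactly the logarithmic connections on $E$; thus $(3)$ asserts the vanishing of the logarithmic Atiyah class $\mathrm{at}_D(E)\in H^1(X,\mathcal{E}nd(E)\otimes\Omega^1_X(\log D))$. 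Since $\Omega^1_X(\log D)$ is trivial, this group is $H^1(X,\mathcal{E}nd(E))\otimes\mathfrak{t}^*$, and the vanishing of $\mathrm{at}_D(E)$ forces the connecting homomorphism $\delta\colon \mathfrak{t}=H^0(X,TX(-\log D))\to H^1(X,\mathcal{E}nd(E))$ of the sequence above to vanish. Consequently each fundamental vector field lifts to a global section of $\mathrm{At}_D(E)$; that is, the infinitesimal $\mathcal{T}$-action lifts to $E$ as a $\CC$-linear map $\sigma\colon\mathfrak{t}\to H^0(X,\mathrm{At}_D(E))$ splitting the sequence on global sections.

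It remains to upgrade $\sigma$ to a genuine equivariant structure, and this is where I expect the main obstacle to lie. Two corrections are needed. First, $\sigma$ should be modified, by adding a map $\mathfrak{t}\to H^0(X,\mathcal{E}nd(E))$, so as to become a homomorphism of Lie algebras; the obstruction is the curvature $[\sigma(u),\sigma(v)]\in H^0(X,\mathcal{E}nd(E))$, a finite dimensional Maurer–Cartan type problem that I would attack using that $\mathfrak{t}$ is abelian together with the induced representation of $\mathfrak{t}$ on the finite dimensional space $H^0(X,\mathcal{E}nd(E))$ and its weight decomposition. Second, the resulting infinitesimal action must be integrated to an algebraic action of $\mathcal{T}=(\CC^*)^d$, which is where $\pi_1(\mathcal{T})$ enters and where the lift is chosen so that the residues land in the weight lattice. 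Carrying out both corrections yields the equivariant structure and closes the cycle. The delicate point throughout is that the triviality of $TX(-\log D)$ is precisely what collapses the logarithmic connection data into a lift of $\mathfrak{t}$, making the passage from the analytic datum of a connection to the algebraic datum of a torus action possible.
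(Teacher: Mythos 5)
Your treatment of $(1)\Rightarrow(2)$ and $(2)\Rightarrow(3)$ is correct and essentially the same as the paper's: the tautological connection is defined on the global frame of $TX(-\log D)$ given by the fundamental vector fields, and integrability follows because $\mathfrak{t}$ is abelian and $v\mapsto\mathcal{L}_v$ is a homomorphism of Lie algebras. (The paper packages this through the Atiyah bundle, setting $\nabla=\gamma\circ\beta^{-1}$, but the content is identical.)

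The genuine gap is in $(3)\Rightarrow(1)$. After producing the $\mathbb{C}$-linear splitting $\sigma\colon\mathfrak{t}\to H^0(X,\mathrm{At}(E)(-\log D))$, you state that two corrections remain --- making $\sigma$ a Lie algebra homomorphism, and integrating the resulting infinitesimal action to an algebraic action of $\mathcal{T}$ --- and then assert that carrying them out yields the equivariant structure. But those two corrections \emph{are} the entire difficulty of the implication, and neither is carried out nor reduced to a statement you can quote. For the first: $\mathfrak{t}$ is abelian, not semisimple, so no Whitehead-type vanishing kills the obstruction $[\sigma(u),\sigma(v)]\in H^0(X,End(E))$, and the ``weight decomposition of $H^0(X,End(E))$'' you invoke presupposes a $\mathfrak{t}$-action on that space of exactly the kind being constructed. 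For the second: as you yourself note, the residues of an arbitrary splitting need not be integral, and it is neither shown nor clear that $\sigma$ can be modified by a map $\mathfrak{t}\to H^0(X,End(E))$ so that the weights land in the character lattice; this integrality is essentially equivalent to the existence of an equivariant structure, so assuming it can be arranged is circular. The paper avoids both problems: it invokes Klyachko's criterion \cite[Proposition 1.2.1]{Kl}, by which $E$ is equivariant as soon as $\rho(g)^*E\cong E$ for every $g\in\mathcal{T}$. Verifying that weaker condition needs only the bare existence of lifts of the fundamental vector fields to global sections of $\mathrm{At}(E)$ --- no bracket compatibility, no integrability, no integrality --- since then the image of the homomorphism $p\colon\mathcal{G}_E\to\mathrm{Aut}(X)$, from the group of bundle automorphisms covering automorphisms of $X$, has Lie algebra containing the image of $\mathfrak{t}$ and hence contains the connected group $\rho(\mathcal{T})$. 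To complete your route you would have to either prove Klyachko's criterion along the way or give an independent argument for your two corrections; as written, the proof stops exactly where the theorem becomes nontrivial.
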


In fact, we show that an equivariant holomorphic vector bundle
on $X$ has a tautological
integrable logarithmic connection singular over $D$ (see Proposition \ref{prop2}).

Using the above mentioned tautological integrable logarithmic connection on an
equivariant holomorphic vector bundle $E$ on the toric variety $X$, we compute
the Chern classes of $E$ (see Section \ref{sec:5}).
Let $N$ be the lattice associated to the toric variety, and let
$M\,=\,\mathrm{Hom}_\ZZ(N,\ZZ)$ be the
dual lattice. The toric variety is defined by a fan $\Delta$. For each
one dimensional face $n\,\in\, \Delta_1$, let $X_n$ denote the corresponding
 boundary divisor. For $m\,\in\, M$, we have integers $d_{nm}\,\geq\, 0$
defined in \eqref{dmn}. Then the Chern classes of $E$ are given by 
 \begin{equation*}
	 c_k(E) = \sum_{\substack{m_1,\ldots,m_k\in M \\ m_i\neq m_j}}
	 \prod_{i=1}^k \left(\sum_{n\in \Delta_1}  d_{nm_i} \langle
	 m_i,n\rangle[X_n]\right)
 \end{equation*}
(see \eqref{eq:Chern_class}).

\medskip

Theorem \ref{thm1} holds for a larger family of varieties. 
Let $G$ be a complex affine algebraic group. Let $X$ be a smooth
complete complex variety on which
$G$ acts. Let $D\,\subset\, X$ be a simple normal crossing divisor
preserved by the action of $G$ on $X$.
This $(X\, ,D)$ is called a $G$--\textit{pair}.

Let $\mathfrak g$ be the Lie algebra of $G$.
For a $G$--pair $(X\, ,D)$, we have a homomorphism of coherent sheaves
\begin{equation*}
\mathrm{op}_{X,D}\,:\,X\times \mathfrak{g}\,
\longrightarrow\, {T}X(-\log D)
\end{equation*}
induced by the action of $G$. The $G$--pair $(X\, ,D)$ is called
\textit{log parallelizable} if the above homomorphism
$\mathrm{op}_{X,D}$ is an isomorphism. See \cite{Br} for properties
of log parallelizable $G$--pairs.

An equivariant vector bundle on a $G$--pair $(X\, ,D)$ is a holomorphic
vector bundle on $X$ equipped with a lift of the action of $G$ on $X$.

In Section \ref{sec:3} we prove the following (see Theorem
\ref{thm:prop-G-1}):

\begin{theorem}\label{thm:prop-G}
Assume that $G$ is simply connected. Let $(X\, ,D)$ be a log parallelizable
$G$--pair. Let $E$ be a holomorphic vector bundle on $X$.
The following two statements are equivalent:
\begin{enumerate}
\item The holomorphic vector bundle $E$ admits an equivariant structure.

\item The holomorphic vector bundle $E$ admits an integrable logarithmic
connection singular over $D$.
\end{enumerate}
\end{theorem}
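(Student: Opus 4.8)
The plan is to prove the two implications separately, using throughout the isomorphism $\mathrm{op}_{X,D}$ between $X\times\mathfrak g$ and $TX(-\log D)$ that expresses log parallelizability. The implication $(1)\Rightarrow(2)$ is the tautological construction, entirely parallel to the toric case treated in Proposition \ref{prop2}; the real content is $(2)\Rightarrow(1)$, where simple connectedness of $G$ is used.

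For $(1)\Rightarrow(2)$ I would differentiate the $G$-action on $E$ to obtain, for each $v\in\mathfrak g$, a first-order holomorphic operator $\mathcal L_v$ on $E$ whose symbol is the fundamental vector field $\xi_v=\mathrm{op}_{X,D}(v)$; these satisfy the Leibniz rule over $\xi_v$ and the bracket relation $[\mathcal L_v,\mathcal L_w]=\mathcal L_{[v,w]}$. Because $\mathrm{op}_{X,D}$ is an isomorphism, every global logarithmic vector field is $\xi_v$ for a unique $v$, so one may define a logarithmic connection by declaring $\langle\nabla s,\xi_v\rangle=\mathcal L_v s$. The Leibniz identity for $\nabla$ is immediate, and integrability follows because the curvature $[\nabla_{\xi_v},\nabla_{\xi_w}]-\nabla_{[\xi_v,\xi_w]}$ collapses to $\mathcal L_{[v,w]}-\mathcal L_{[v,w]}=0$, the bracket conventions for $\mathcal L$ and for the fields $\xi$ being the same.

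For $(2)\Rightarrow(1)$ I would run this construction backwards. Contracting an integrable logarithmic connection $\nabla$ against $\mathrm{op}_{X,D}$ produces operators $\nabla_v:=\nabla_{\xi_v}$, $v\in\mathfrak g$. The crucial point is that each $\nabla_v$ is an honest holomorphic first-order operator on all of $X$, with no pole along $D$: pairing $\Omega^1_X(\log D)$ against the logarithmic field $\xi_v$ lands in $\mathcal O_X$ by the duality of $\Omega^1_X(\log D)$ and $TX(-\log D)$. Integrability of $\nabla$ yields precisely $[\nabla_v,\nabla_w]=\nabla_{[v,w]}$, so $v\mapsto\nabla_v$ is a genuine Lie-algebra action; reading each $\nabla_v$ as an infinitesimal bundle automorphism gives a holomorphic vector field $\tilde\xi_v$ on the total space $\mathrm{Tot}(E)$, linear along fibers and lying over $\xi_v$, with $v\mapsto\tilde\xi_v$ a Lie-algebra homomorphism into $\mathrm{Vect}(\mathrm{Tot}(E))$.

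The main obstacle is the final step: integrating this infinitesimal $\mathfrak g$-action to a genuine $G$-action, and this is where both hypotheses bite. I would first check completeness --- the $\tilde\xi_v$ cover the fundamental fields of the \emph{existing} $G$-action on the complete (hence compact) variety $X$ and are linear on fibers, so their flows, solving linear equations over a globally defined base flow, are complete. Flatness, through the bracket relations just obtained, makes $v\mapsto\tilde\xi_v$ a bona fide Lie-algebra action; simple connectedness of $G$ then lets a Palais-type integration theorem produce a holomorphic $G$-action on $\mathrm{Tot}(E)$ integrating it. Being fiberwise linear and lying over the $G$-action on $X$, this is the sought equivariant structure. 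Simple connectedness is exactly what kills the monodromy obstruction to globalizing the local one-parameter flows; without it the lift could fail to descend to $G$ itself.
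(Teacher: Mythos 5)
Your proposal is correct, and its skeleton matches the paper's: in both, $(1)\Rightarrow(2)$ is the tautological differentiation of the action transported through $\mathrm{op}_{X,D}$, and $(2)\Rightarrow(1)$ turns the flat logarithmic connection into a Lie algebra homomorphism out of $\mathfrak g$ and integrates it using simple connectedness. Where you genuinely diverge is the integration mechanism. The paper forms the group $\mathcal G_E$ of pairs $(f,\phi)$, with $f\in\mathrm{Aut}(X)$ and $\phi$ a bundle automorphism of $E$ over $f$, asserts that $\mathcal G_E$ is a finite-dimensional complex Lie group with Lie algebra $H^0(X,\mathrm{At}(E))$, notes that flatness makes the global-sections map $\nabla'$ composed with $\mathrm{op}_{X,D}$ a Lie algebra homomorphism $\mathfrak g\to \mathrm{Lie}(\mathcal G_E)$, and then invokes the classical theorem that a Lie algebra homomorphism from a simply connected group's Lie algebra integrates to a group homomorphism $G\to\mathcal G_E$. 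You instead stay on $\mathrm{Tot}(E)$: you realize the operators $\nabla_{\xi_v}$ as fiberwise-linear holomorphic vector fields $\tilde\xi_v$, prove their completeness directly (linear lifts of the complete fundamental fields on the compact base, so the fiber flows solve linear ODEs with entire coefficients), and integrate the resulting $\mathfrak g$-action by a Palais-type theorem. The content is the same, but the work is distributed differently: the paper's route buries the analytic input inside the unproved assertion that $\mathcal G_E$ is a Lie group with the stated Lie algebra (a Bochner--Montgomery/Douady-type fact resting on compactness of $X$), while your route makes that input explicit and elementary, at the price of invoking Palais' theorem, checking it applies in the holomorphic category, and keeping the sign conventions for fundamental fields and for the bracket on $\mathrm{At}(E)$ consistent --- points you gesture at but would need to pin down in a full write-up.
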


In Section \ref{ex} we give an example showing that the assumption
in Theorem \ref{thm:prop-G} that $G$ is simply connected is necessary.

If $G$ is semisimple, then the two statements in Theorem \ref{thm:prop-G}
is equivalent to the third statement of Theorem \ref{thm1}. More
precisely, we prove the following (see Corollary \ref{cor-s}):

\begin{proposition}\label{prop-s2}
Assume that $G$ is semisimple and simply connected. Let $E$ be a holomorphic
vector bundle on $X$. Then the two statements
in Theorem \ref{thm:prop-G} is equivalent to the following: The vector
bundle $E$ admits a logarithmic connection singular over $D$.
\end{proposition}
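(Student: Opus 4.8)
The plan is to derive the new statement from Theorem~\ref{thm:prop-G}. Since statements (1) and (2) of that theorem are already known to be equivalent, and since an integrable logarithmic connection is in particular a logarithmic connection, it suffices to prove that the existence of a logarithmic connection singular over $D$ forces the existence of an \emph{integrable} one. So let $\nabla$ be a logarithmic connection on $E$. First I would exploit the hypothesis that $(X,D)$ is log parallelizable: the isomorphism $\mathrm{op}_{X,D}$ trivializes $TX(-\log D)$, so each $v\in\mathfrak g$ determines a global logarithmic vector field $\xi_v$, and these form a global frame. Evaluating $\nabla$ on this frame yields a $\mathbb C$-linear map $\rho\colon\mathfrak g\to\mathrm{Diff}^1(E)$, $v\mapsto\nabla_{\xi_v}$, where each $\rho(v)$ is a first order differential operator on $E$ with symbol $\xi_v$. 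Conversely, because the $\xi_v$ frame $TX(-\log D)$, any such symbol-preserving linear map extends $\mathcal O_X$-linearly to a logarithmic connection, and this connection is integrable precisely when $\rho$ is a homomorphism of Lie algebras; this dictionary is the same one underlying the proof of Theorem~\ref{thm:prop-G}.

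Next I would measure the failure of $\rho$ to be a homomorphism. Because $\mathrm{op}_{X,D}$ intertwines the bracket of $\mathfrak g$ with the Lie bracket of vector fields (after fixing a sign convention), one has $[\xi_v,\xi_w]=\xi_{[v,w]}$, so the symbols in $\rho(v)\rho(w)-\rho(w)\rho(v)-\rho([v,w])$ cancel. Hence this expression is $\mathcal O_X$-linear, i.e. a global section of $\mathrm{End}(E)$. Writing $V:=H^0(X,\mathrm{End}(E))$, which is finite dimensional since $X$ is complete, I obtain the key structure: the subspace $\mathfrak h:=\{\rho(v)+s : v\in\mathfrak g,\ s\in V\}\subset\mathrm{Diff}^1(E)$ is closed under the commutator bracket (using $[\rho(v),s]\in V$ and $[V,V]\subset V$), and the assignment $\rho(v)+s\mapsto v$ is a well-defined surjective homomorphism of Lie algebras with kernel $V$. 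Well-definedness, hence finite dimensionality of $\mathfrak h$, uses the injectivity of $\mathrm{op}_{X,D}$, which guarantees that $\rho(v)$ has order zero only if $v=0$. Thus I arrive at a finite dimensional extension of Lie algebras
\[
0\longrightarrow V\longrightarrow\mathfrak h\longrightarrow\mathfrak g\longrightarrow 0 .
\]

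The heart of the argument, and the place where semisimplicity enters, is to split this extension. Here I would invoke the Levi decomposition $\mathfrak h=\mathfrak r\oplus\mathfrak s$ of $\mathfrak h$ into its radical $\mathfrak r$ and a semisimple Levi subalgebra $\mathfrak s$. Since the projection $\mathfrak h\to\mathfrak g$ sends the solvable ideal $\mathfrak r$ onto a solvable ideal of the semisimple algebra $\mathfrak g$, it must kill $\mathfrak r$; therefore $\mathfrak s$ already surjects onto $\mathfrak g$. Its kernel $\mathfrak s\cap V$ is an ideal of the semisimple algebra $\mathfrak s$, so it admits a complementary ideal $\mathfrak s'\subset\mathfrak s$ mapping isomorphically onto $\mathfrak g$; the inverse of this isomorphism is a Lie algebra section $\sigma\colon\mathfrak g\to\mathfrak h\subset\mathrm{Diff}^1(E)$. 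By construction $\sigma(v)$ has symbol $\xi_v$, so $\sigma$ is a symbol-preserving homomorphism, and the dictionary of the first paragraph turns it into an integrable logarithmic connection on $E$. This establishes (3)$\Rightarrow$(2) and completes the chain of equivalences.

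I expect the main obstacle to be this last step, where $V$ carries a genuinely nonabelian Lie structure, so that one cannot simply cite the vanishing $H^2(\mathfrak g,V)=0$ valid for abelian coefficients; the Levi-theoretic argument above is the clean way around this, and it is exactly where both the semisimplicity of $\mathfrak g$ and, through the finiteness of $V$, the completeness of $X$ are used. I would also take care to record explicitly, perhaps as a brief lemma, the symbol-preserving dictionary between logarithmic connections and linear maps $\mathfrak g\to\mathrm{Diff}^1(E)$, since the integrability statement is sensitive to the sign convention making $\mathrm{op}_{X,D}$ a bracket-preserving rather than bracket-reversing map; replacing $\mathfrak g$ by its opposite if necessary leaves semisimplicity intact and removes any ambiguity.
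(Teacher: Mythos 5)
Your argument is correct and is essentially the paper's own proof (Proposition \ref{prop-s} combined with Theorem \ref{thm:prop-G-1}): your Lie algebra $\mathfrak h$ of symbol-preserving first-order operators is exactly the paper's $H^0(X,\mathrm{At}(E)(-\log D))$, and your extension $0\to V\to \mathfrak h\to \mathfrak g\to 0$ together with its Levi-theoretic splitting is precisely what the paper obtains by citing Bourbaki (the splitting of a surjection of finite-dimensional Lie algebras onto a semisimple one), the resulting section being spread out $\mathcal O_X$-linearly into an integrable logarithmic connection exactly as you describe. The only difference is presentational: you prove the splitting lemma by hand via the Levi decomposition rather than citing it.
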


A key step in the proof of Theorem \ref{thm1} is a theorem of Klyachko which
says that a holomorphic vector bundle $E$ on a toric variety $X$ admits
an equivariant structure if and only if all the pullbacks of $E$ by the
elements of the torus acting on $X$ are holomorphically isomorphic to $E$.
The above mentioned example of Section \ref{ex} also shows that this criterion fails for
holomorphic vector bundles on a general log parallelizable $G$--pair.

\medskip

We next consider equivariant vector bundles over a
complete toric variety $X$ defined over
an algebraically closed field of arbitrary characteristic.
An equivariant vector bundle $E$ over $X$ is nef (respectively, trivial)
if and only if its restriction to any invariant curve
$C\,\subset\, X$ is nef (respectively, trivial) \cite{HMP}.
In Section \ref{sec:4}, we see that such an explicit result cannot be expected
for \emph{semistable} equivariant bundles. In fact, if an equivariant
vector bundle on $X$ is semistable when restricted to any invariant
curve, then it is decomposable into
a direct sum of copies of one line bundle (Proposition \ref{prop1}).

\medskip
\noindent \textbf{Acknowledgements:}\,
We are grateful to the referee for useful comments on a previous version of
the paper. The first-named author wishes
to thank ICMAT for its hospitality while the work was carried out.

\section{Logarithmic connection on equivariant bundles} \label{sec:2}

In this section, the base field is taken to be the complex numbers.

Let $X$ to be a smooth complete toric variety of dimension $d$. Let $\mathcal T$
denote
the complex torus that acts on $X$ defining its toric structure.
Let ${\mathcal U}\, \subset\,X$ be the nonempty
Zariski open subset on which the action of $\mathcal T$ is free. Let
$$D\, :=\, X\setminus \mathcal U\,$$
be the complement.
Recall that $D$ is a simple normal crossing divisor.

\begin{proposition}\label{prop2}
Let $E$ be an equivariant vector bundle on $X$. Then $E$ has a natural
integrable logarithmic connection singular over $D$.
\end{proposition}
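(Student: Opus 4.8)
The plan is to construct the connection directly from the infinitesimal action of the torus, exploiting the fact that the fundamental vector fields of the $\mathcal T$-action trivialize $TX(-\log D)$. First I would recall that a logarithmic connection on $E$ singular over $D$ is the same thing as an $\mathcal O_X$-linear assignment $v\,\mapsto\,\nabla_v$ sending each logarithmic vector field $v$ (a section of $TX(-\log D)$) to a first-order differential operator on $E$ with symbol $v$, that is, $\nabla_v(fs)\,=\,f\nabla_v(s)+(v\cdot f)s$; integrability is the condition that $v\,\mapsto\,\nabla_v$ be a homomorphism of Lie algebras, $\nabla_{[v,w]}\,=\,[\nabla_v,\nabla_w]$. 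This dual formulation is convenient because the $\mathcal T$-action provides exactly such operators.

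Let $\mathfrak t$ denote the Lie algebra of $\mathcal T$. The equivariant structure on $E$, being a lift of the $\mathcal T$-action to the total space of $E$, differentiates to an action of $\mathfrak t$ on the sheaf of sections of $E$ by Lie-derivative operators $L_\xi$, for $\xi\,\in\,\mathfrak t$. Each $L_\xi$ is a first-order operator whose symbol is the fundamental vector field $V_\xi$ generated by $\xi$, so that $L_\xi(fs)\,=\,fL_\xi(s)+(V_\xi\cdot f)s$. Because $\mathcal T$ is abelian, these operators commute: $[L_\xi,L_{\xi'}]\,=\,L_{[\xi,\xi']}\,=\,0$.

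Now I would invoke the trivialization of $TX(-\log D)$ of \cite{Wi}: the assignment $\xi\,\mapsto\,V_\xi$ induces a holomorphic isomorphism $X\times\mathfrak t\,\xrightarrow{\,\sim\,}\,TX(-\log D)$, which is precisely the toric instance of the map $\mathrm{op}_{X,D}$ of the introduction. Fixing a basis $\{\xi_1,\ldots,\xi_d\}$ of $\mathfrak t$, every logarithmic vector field $v$ has a unique expression $v\,=\,\sum_i f_iV_{\xi_i}$ with $f_i\,\in\,\mathcal O_X$, and I define $\nabla_v\,:=\,\sum_i f_iL_{\xi_i}$. The Leibniz rule follows from the one for $L_{\xi_i}$: $\nabla_v(gs)\,=\,\sum_i f_i\big[(V_{\xi_i}\cdot g)s+gL_{\xi_i}(s)\big]\,=\,(v\cdot g)s+g\nabla_v(s)$, using $v\cdot g\,=\,\sum_i f_i(V_{\xi_i}\cdot g)$. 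This produces the logarithmic connection, and its poles lie exactly along $D$ precisely because the $V_\xi$ are logarithmic, not ordinary, vector fields. For integrability I would use that the curvature $R(v,w)\,:=\,[\nabla_v,\nabla_w]-\nabla_{[v,w]}$ is $\mathcal O_X$-bilinear and alternating, so it suffices to evaluate it on the frame $\{V_{\xi_i}\}$; there $[V_{\xi_i},V_{\xi_j}]\,=\,V_{[\xi_i,\xi_j]}\,=\,0$ and $[\nabla_{V_{\xi_i}},\nabla_{V_{\xi_j}}]\,=\,[L_{\xi_i},L_{\xi_j}]\,=\,0$, whence $R\,=\,0$.

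The step requiring the most care is the identification of the symbol of the Lie-derivative operator $L_\xi$ with the fundamental vector field $V_\xi$, i.e. the verification that the infinitesimal action of $\mathfrak t$ on sections of $E$ genuinely covers the infinitesimal action on $X$; this compatibility is what makes $\nabla_v$ an operator of symbol $v$ and hence makes the Leibniz computation valid. Everything else — well-definedness via the trivialization, and integrability via commutativity of $\mathfrak t$ together with $\mathcal O_X$-bilinearity of the curvature — is then formal.
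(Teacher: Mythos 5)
Your proof is correct and follows essentially the same route as the paper: your operators $\xi \mapsto L_\xi$ are the paper's homomorphism $\gamma$ of \eqref{e7} into the logarithmic Atiyah bundle, your frame of fundamental vector fields is the paper's isomorphism $\beta$ of \eqref{e3}, and your connection $\nabla_v = \sum_i f_i L_{\xi_i}$ is exactly $\gamma\circ\beta^{-1}$ read through the covariant-derivative formulation \eqref{co-op}, with integrability obtained from the same bracket compatibility (the paper even remarks, in proving Proposition \ref{prop:cor-G}, that abelianness of $\mathfrak t$ is not essential). The only step you treat more lightly than the paper is the claim that $\xi\mapsto V_\xi$ gives an isomorphism $X\times\mathfrak t \to TX(-\log D)$: Winkelmann's theorem literally provides only that $TX(-\log D)$ is holomorphically trivial, and the paper upgrades this to the statement you need by the additional observation that a homomorphism of holomorphically trivial bundles over the compact connected $X$ which is an isomorphism over $\mathcal U$ is an isomorphism everywhere.
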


\begin{proof}
The Lie algebra of the torus $\mathcal T$ acting on $X$
will be denoted by $\mathfrak t$. Let
$$
V\, :=\, X\times {\mathfrak t}\, \longrightarrow\, X
$$
be the trivial holomorphic vector bundle with fiber $\mathfrak t$. The action
of $\mathcal T$ on $X$ defines an ${\mathcal O}_X$--linear homomorphism
\begin{equation}\label{e4}
V\, \longrightarrow\, TX\, .
\end{equation}
We recall that $TX(-\log D)$ is the subsheaf of $TX$ generated by all locally
defined holomorphic
vector fields that take the subsheaf ${\mathcal O}_X(-D)\,\subset\,
{\mathcal O}_X$ to ${\mathcal O}_X(-D)$. It is easy to see that the image of
the homomorphism in \eqref{e4} is contained in the subsheaf $TX(-\log D)\,
\subset\, TX$. Indeed, this follows immediately from
the fact that the action of $\mathcal T$ on $X$ preserves $D$.

Let
\begin{equation}\label{e3}
\beta\, :\, V\, \longrightarrow\, TX(-\log D)
\end{equation}
be the homomorphism obtained above from the action of $\mathcal T$
on $X$. We noted earlier that $TX(-\log D)$ is holomorphically trivial
by \cite[p. 473--474, Main Theorem]{Wi}.

Any holomorphic homomorphism of holomorphically trivial vector bundles
on $X$ which is isomorphism over some point of $X$ can be shown
to be an isomorphism over entire $X$. To prove this, for any such homomorphism
$\psi\, :\, E_1\, \longrightarrow\, E_2$, consider the homomorphism
$$
\bigwedge\nolimits^r \psi\, :\, \bigwedge\nolimits^r E_1\, \longrightarrow\,
\bigwedge\nolimits^r E_2\, ,
$$
where $r$ is the rank of $E_1$ (same as the rank of $E_2$ because $\psi$
is an isomorphism over some point of $X$). Therefore, $\bigwedge^r \psi$
defines a nonzero holomorphic section of the holomorphic line bundle
$Hom(\bigwedge^r E_1\, ,\bigwedge^r E_2)\,=\,
{\mathcal O}_X$ (recall that both $E_1$ and $E_2$
are trivial). But any holomorphic function on $X$ non identically
zero does not vanish anywhere. Hence the homomorphism $\bigwedge^r \psi$
is nowhere vanishing. This implies that $\psi$ is an isomorphism over
$X$.

Since both $V$ and $TX(-\log D)$ are holomorphically trivial, and
the homomorphism $\beta$ in \eqref{e3} is an isomorphism over $\mathcal U$, we
conclude that $\beta$ is an isomorphism over $X$.
\medskip

Let $\text{At}(E)$ be the Atiyah bundle for $E$. We quickly recall
the construction of $\text{At}(E)$ from \cite{Atiyah-bundle}. Let
\begin{equation}\label{f}
\delta\, :\, E_{\rm GL}\, \longrightarrow\, X
\end{equation}
be the principal $\text{GL}(r, {\mathbb C})$--bundle associated to $E$, where
$r$ is the rank of $E$; so the fiber of $E_{\rm GL}$ over any point
$x\, \in\, X$ is the space is all linear isomorphisms from ${\mathbb C}^r$
to the fiber $E_x$. The action of $\text{GL}(r, {\mathbb C})$ on
$E_{\rm GL}$ produces an action of $\text{GL}(r, {\mathbb C})$ on
the direct image $\delta_*(TE_{\rm GL})$. Then
$$
\text{At}(E)\, :=\, (\delta_*(TE_{\rm GL}))^{\text{GL}(r,
{\mathbb C})}\, \subset\, \delta_*(TE_{\rm GL})
$$
is the invariant direct image.
We note that the Lie bracket operation of locally defined
holomorphic vector fields on
$E_{\rm GL}$ produces a Lie algebra structure on the sheaf of sections of
$\text{At}(E)$.

The Atiyah bundle $\text{At}(E)$ fits in a short
exact sequence of holomorphic vector bundles over $X$
 \begin{equation}\label{e9}
 0\,\longrightarrow\, End(E)\,\longrightarrow\, \text{At}(E)
 \,\stackrel{\overline{\eta}}{\longrightarrow}\, TX\,\longrightarrow\, 0
 \end{equation}
which is known as the \textit{Atiyah exact sequence} for $E$. The
Atiyah exact sequence produces the short exact sequence
\begin{equation}\label{e5}
 0\,\longrightarrow\, End(E)(-\log D) \,\longrightarrow\, \text{At}(E)(-\log D)
 \,\stackrel{\eta}{\longrightarrow}
\, TX(-\log D)\,\longrightarrow\, 0\, ,
\end{equation}
where
\begin{equation}\label{fd1}
\text{At}(E)(-\log D) \, :=\, \overline{\eta}^{-1}(TX(-\log D))\, .
\end{equation}
The homomorphism $\eta$ in \eqref{e5} is the restriction of $\overline\eta$
in \eqref{e9}. Note that
$$
\text{At}(E)(-\log D) \, =\, (\delta_*(TE_{\rm GL}(-\log \delta^{-1}(D)))
)^{\text{GL}(r, {\mathbb C})}\, ,
$$
where $\delta$ is the projection in \eqref{f}.
Therefore the Lie algebra structure on the sheaf of sections of $\text{At}(E)$
preserves the sheaf of sections of $\text{At}(E)(-\log D)$.

A logarithmic connection on $E$ singular over $D$ is, by definition,
a holomorphic splitting of the short exact sequence in \eqref{e5}.

The pulled back vector bundle $\delta^*E\,\longrightarrow\,E_{\rm GL}$
is canonically identified with the trivial vector bundle on $E_{\rm GL}$
with fiber ${\mathbb C}^r$ (recall that the points of $E_{\rm GL}$ are
isomorphisms of ${\mathbb C}^r$ with fibers of $E$). Therefore, we can
take the de Rham differential of the locally defined sections of
$\delta^*E$. Using it, a logarithmic connection $\nabla$ on $E$ singular over $D$
is identified with a holomorphic differential operator of order one
\begin{equation}\label{co-op}
\widetilde{\nabla}\, :\, E\,\longrightarrow\, E\otimes\Omega_X(\log D)
\,=\, E\otimes(TX(-\log D))^*
\end{equation}
satisfying the Leibniz identity which says that
$\widetilde{\nabla}(fs) \,=\, f\widetilde{\nabla}(s)+
s\otimes(df)$, where $s$ is any locally defined holomorphic section of $E$
and $f$ is any locally defined holomorphic function on $X$.

The action of $\mathcal T$ on $E$ produces an action of $\mathcal T$
on the total space of the principal bundle $E_{\rm GL}$ in \eqref{f}. Therefore,
we get a ${\mathcal O}_{E_{\rm GL}}$--linear homomorphism
$$
\zeta\, :\, E_{\rm GL}\times {\mathfrak t}\, \longrightarrow\, TE_{\rm GL}\, ,
$$
where $E_{\rm GL}\times {\mathfrak t}$ is the trivial holomorphic vector
bundle on $E_{\rm GL}$ with fiber ${\mathfrak t}$. The action of
$\text{GL}(r, {\mathbb C})$ on $E_{\rm GL}$ and the trivial action of
$\text{GL}(r, {\mathbb C})$ on $\mathfrak t$ together produce an
action of $\text{GL}(r, {\mathbb C})$ on $E_{\rm GL}\times {\mathfrak t}$.
The above homomorphism $\zeta$ is clearly $G$--equivariant. Therefore,
$\zeta$ descends to a ${\mathcal O}_X$--linear homomorphism
\begin{equation}\label{e6}
V\,=\, X\times {\mathfrak t}\, \longrightarrow\, \text{At}(E)\, .
\end{equation}
Since the image of the homomorphism in \eqref{e4} is contained in
$TX(-\log D)$, and the diagram
$$
\begin{matrix}
V & \longrightarrow & \text{At}(E)\\
\Vert && ~\Big\downarrow\overline{\eta}\\
V & \longrightarrow & TX
\end{matrix}
$$
is commutative (the horizontal maps are as in \eqref{e6} and \eqref{e4}
respectively), it follows that the homomorphism in \eqref{e6} factors
through a homomorphism
\begin{equation}\label{e7}
\gamma\, :\, V\, \longrightarrow\, \text{At}(E)(-\log D)\, \subset\,
\text{At}(E)\, ,
\end{equation}
where $\text{At}(E)(-\log D)$ is defined in \eqref{fd1}.

We showed above that the homomorphism $\beta$ in \eqref{e3} is an isomorphism.
Consider the composition
$$
\gamma\circ \beta^{-1}\, :\, TX(-\log D)\,\longrightarrow\, \text{At}(E)(-\log D)\, .
$$
Since the action of $\mathcal T$ on $E$ is a lift of the action of $\mathcal T$ on $X$,
it follows that
$$
\eta\circ (\gamma\circ \beta^{-1})\,=\, \text{Id}_{TX(-\log D)}\, ,
$$
where $\eta$ is the homomorphism in \eqref{e5}.
Therefore, the homomorphism $\gamma\circ \beta^{-1}$
defines a logarithmic connection on $E$ singular over $D$.

The homomorphism in \eqref{e4} takes the Lie algebra structure on the fibers
of $V$ (recall that the fibers of $V$ are the abelian Lie
algebra $\mathfrak t$) to the Lie algebra structure on the
sheaf of holomorphic vector
fields on $X$ defined by the Lie bracket operation. Similarly, the
homomorphism in \eqref{e6}
takes the Lie algebra structure on the fibers of $V$ to the Lie algebra
structure on the sheaf of sections of $\text{At}(E)$ (this Lie
algebra structure was explained earlier). Consequently, the
homomorphism
$\gamma\circ \beta^{-1}$ is compatible with the Lie algebra structure on the
sheaf of sections of $TX(-\log D)$ and $\text{At}(E)(-\log D)$ (recall
that the Lie algebra structure of the sheaf of sections of $\text{At}(E)$
preserves the sheaf of sections of $\text{At}(E)(-\log D)$). Therefore, the
logarithmic connection on $E$ defined by $\gamma\circ \beta^{-1}$ is integrable.
\end{proof}

\begin{theorem}\label{thm1-1}
Let $E$ be a holomorphic vector bundle over the toric variety $X$. The following
three statements are equivalent:
\begin{enumerate}
\item The vector bundle $E$ admits an equivariant structure.

\item The vector bundle $E$ admits an integrable logarithmic connection singular
over $D$.

\item The vector bundle $E$ admits a logarithmic connection singular
over $D$.
\end{enumerate}
\end{theorem}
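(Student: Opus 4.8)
The plan is to establish the cycle of implications $(1)\Rightarrow(2)\Rightarrow(3)\Rightarrow(1)$. The implication $(1)\Rightarrow(2)$ is precisely Proposition \ref{prop2}, which attaches to any equivariant bundle a natural integrable logarithmic connection singular over $D$. The implication $(2)\Rightarrow(3)$ is immediate, since an integrable logarithmic connection singular over $D$ is in particular a logarithmic connection singular over $D$. Thus the whole content of the theorem lies in the remaining implication $(3)\Rightarrow(1)$: a bundle carrying a \emph{not necessarily integrable} logarithmic connection already admits an equivariant structure.

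For $(3)\Rightarrow(1)$ I would invoke the criterion of Klyachko recalled in the introduction, namely that $E$ admits an equivariant structure if and only if $t^*E\,\cong\,E$ for every $t\,\in\,\mathcal T$. So it suffices to produce, for each $t\,\in\,\mathcal T$, a holomorphic isomorphism between $E$ and its pullback $t^*E$. Let $\widetilde{\nabla}$ be the first-order operator of \eqref{co-op} attached to the given logarithmic connection. For $\xi\,\in\,\mathfrak t$, let $\beta(\xi)$ be the global logarithmic vector field produced by the isomorphism $\beta$ of \eqref{e3}; recall that $\beta(\xi)$ generates the flow on $X$ given by the one-parameter subgroup $\exp(t\xi)\,\subset\,\mathcal T$. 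Contracting $\widetilde{\nabla}$ with $\beta(\xi)$ through the duality pairing $TX(-\log D)\otimes\Omega_X(\log D)\,\to\,\mathcal O_X$ yields a first-order differential operator
$$
\nabla_{\beta(\xi)}\,:\,E\,\longrightarrow\, E
$$
with symbol $\beta(\xi)\cdot\mathrm{Id}_E$. The essential point here is that, although $\widetilde{\nabla}$ has a logarithmic pole along $D$, its contraction with the logarithmic vector field $\beta(\xi)$ is genuinely holomorphic over all of $X$, including over $D$; this is exactly the compatibility of $TX(-\log D)$ with $\Omega_X(\log D)$.

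The operator $\nabla_{\beta(\xi)}$ is the infinitesimal generator of the parallel transport of the connection along the orbits of $\exp(t\xi)$. I would integrate it: since $X$ is complete, the flow $\phi_t\,=\,\exp(t\xi)$ on $X$ exists for all $t$, and in a local frame $\nabla_{\beta(\xi)}\,=\,\beta(\xi)+A$ with $A\,\in\,\mathrm{End}(E)$ holomorphic and hence bounded on the compact $X$, so the fiberwise linear ODE governing parallel transport has solutions for all time. The resulting time-$1$ map covers $\phi_1\,=\,\exp(\xi)$ on $X$, and therefore is a holomorphic isomorphism $E\,\cong\,\exp(\xi)^*E$ over the identity of $X$. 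Because the exponential map $\mathfrak t\,\to\,\mathcal T$ is surjective, every $t\,\in\,\mathcal T$ is of the form $\exp(\xi)$, whence $t^*E\,\cong\,E$ for all $t$; Klyachko's criterion then furnishes the equivariant structure, establishing $(3)\Rightarrow(1)$ and closing the cycle.

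I expect the main obstacle to be the integration step. One must verify both that the contraction of the logarithmic connection with the logarithmic vector field is regular across $D$ (so that parallel transport is defined over the whole of $X$, and not merely over $\mathcal U$) and that the associated flow on the total space of $E$ is complete (so that the time-$1$ map is globally defined). The first is settled by the log/log duality noted above, and the second by the compactness of $X$ together with the completeness of the torus flow on the base. Notably, no integrability of the connection enters this argument, which is exactly why statement $(3)$ alone suffices.
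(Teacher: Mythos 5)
Your proposal is correct and takes essentially the same route as the paper: both reduce $(3)\Rightarrow(1)$ to Klyachko's criterion and obtain the required isomorphisms $t^*E\,\cong\,E$ by integrating the lift of the infinitesimal torus action that the logarithmic connection provides (namely $\nabla\circ\beta(\xi)$, a global section of $\mathrm{At}(E)(-\log D)$), noting in both cases that integrability is never used. The only difference is packaging: you integrate by hand, contracting with logarithmic vector fields and invoking completeness of the fiberwise-linear flow, whereas the paper exponentiates inside the complex Lie group ${\mathcal G}_E$ of bundle automorphisms covering automorphisms of $X$, whose Lie algebra is $H^0(X,\,\mathrm{At}(E))$, concluding that the image of $p\,:\,{\mathcal G}_E\,\longrightarrow\,\mathrm{Aut}(X)$ contains $\rho({\mathcal T})$.
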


\begin{proof}
{}From Proposition \ref{prop2} we know that the first statement implies
the second statement. The second statement immediately implies the
third statement.
So it is enough to prove that the third statement implies the first statement.

Let
\begin{equation}\label{rho}
\rho\, :\, {\mathcal T}\, \longrightarrow\, \text{Aut}(X)
\end{equation}
be the action of $\mathcal T$ on $X$.

Assume that $E$ admits a logarithmic connection singular over $D$.

It is
known that a holomorphic vector bundle $W$ on $X$ admits an equivariant
structure if for every $g\, \in\, \mathcal T$, the pulled back vector bundle
$\rho(g)^*W$ (see \eqref{rho}) is holomorphically isomorphic to $W$ \cite[p. 342,
Proposition 1.2.1]{Kl}.

We will show that the pulled back vector bundle $\rho(g)^*E$ is
holomorphically isomorphic to $E$ for all $g\, \in\, \mathcal T$.

Let ${\mathcal G}_E$ be the set of all pairs of the form $(f\, , \phi)$, where
$f\,\in\, \text{Aut}(X)$ and $\phi$ is a holomorphic automorphism of the
vector bundle $E$ over the automorphism $f$.
So $f$ and $\phi$ fit in a commutative diagram
$$
\begin{matrix}
E & \stackrel{\phi}{\longrightarrow} & E\\
\Big\downarrow && \Big\downarrow\\
X & \stackrel{f}{\longrightarrow} & X
\end{matrix}
$$
such that $\phi$ is fiberwise linear.
Note that $\phi$ defines a holomorphic isomorphism
of $E$ with the pullback $f^*E$. The natural composition operation makes
${\mathcal G}_E$ a group. The inverse of an element $(f\, , \phi)$ is
$(f^{-1}\, , \phi^{-1})$. This ${\mathcal G}_E$ is a complex Lie group with
Lie algebra $H^0(X, \, \text{At}(E))$. We recall that
the Lie algebra structure of
$H^0(X, \, \text{At}(E))$ is given by the Lie bracket of holomorphic
vector fields.

Let
\begin{equation}\label{eq1}
p\, :\, {\mathcal G}_E\, \longrightarrow\, \text{Aut}(X)
\end{equation}
be the homomorphism defined by $(f\, ,\phi)\,\longmapsto\, f$. The
corresponding homomorphism of Lie algebras
\begin{equation}\label{e8}
dp\, :\, \text{Lie}({\mathcal G}_E)\,=\, H^0(X, \, \text{At}(E))\,
\longrightarrow\, \text{Lie}({\rm Aut}(X))\,=\, H^0(X,\, TX)
\end{equation}
coincides with the homomorphism $H^0(X, \, \text{At}(E))\,
\longrightarrow\, H^0(X,\, TX)$ induced by $\overline\eta$ in \eqref{e9}.

Consider $\text{At}(E)(-\log D)$ defined in \eqref{fd1}. Let
$$
\nabla\,:\, TX(-\log D)\,\longrightarrow\, \text{At}(E)(-\log D)
$$
be a logarithmic connection on $E$ singular over $D$. Let
$$
\nabla'\,:\, H^0(X,\, TX(-\log D))\,\longrightarrow\, H^0(X,\, \text{At}(E)(-\log D))
$$
be the homomorphism induced by $\nabla$. We recall that $H^0(X,\, TX(-\log D))$
is identified with the Lie algebra $\mathfrak t$ using the action of
$\mathcal T$ on $X$ (the isomorphism $\beta$ in \eqref{e3} produces the
identification). Since $\eta\circ\nabla\,=\,
\text{Id}_{TX(-\log D)}$, where $\eta$ is the homomorphism in \eqref{e5}, it follows that
the composition with $dp$ in \eqref{e8}
$$
(dp)\circ \nabla'\, :\, H^0(X,\, TX(-\log D))\,\longrightarrow\, H^0(X,\, TX)
$$
actually coincides with the homomorphism given by the inclusion
$TX(-\log D)\, \hookrightarrow\, TX$; recall that $\text{At}(E)(-\log D)$ is a
subsheaf of $\text{At}(E)$, so the composition $(dp)\circ \nabla'$ is
well--defined.

Consequently, the image of $dp$ contains $H^0(X,\, TX(-\log D))$. Since the group
$\mathcal T$ is connected, and the homomorphism $\beta$ in \eqref{e3} is an
isomorphism, this implies that the image of the homomorphism $p$
in \eqref{eq1} contains $\rho({\mathcal T})$. Therefore, for each $g\, \in\,
\mathcal T$, there is a holomorphic isomorphism between $\rho(g)^*E$
and $E$. Now by the earlier mentioned criterion of Klyachko, the holomorphic
vector bundle $E$ admits an equivariant structure. This completes the proof.
\end{proof}

\subsection{Equivariant Krull--Schmidt decomposition} \label{sec:2.2}

An equivariant vector bundle on a smooth complete complex toric variety
$X$ is called \textit{decomposable} if it is a direct sum of two
equivariant vector bundles of positive rank. An equivariant vector bundle is
called \textit{indecomposable} if it is not decomposable.

For an equivariant vector bundle $E$ on $X$, let $\text{Aut}^{\mathcal T}(E)$
denote the group of all holomorphic
$\mathcal T$--equivariant automorphisms of the vector
bundle $E$. We note that $E$ is indecomposable if and only if the maximal
torus of $\text{Aut}^{\mathcal T}(E)$ is of dimension one (in other
words, the maximal torus is isomorphic to ${\mathbb C}^*$).

The next corollary follows from Theorem 4.1 of \cite{BP}. It also
follows from Klyachko's classification of toric vector bundles in \cite{Kl}.

\begin{corollary}\label{cor-KRS}
Let $E$ be an equivariant vector bundle over a smooth compete
complex toric variety $X$. Let
$$
E\, =\, \bigoplus_{i=1}^m E_i \,~ \text{  and  }~\,  E\, =\, \bigoplus_{i=1}^n F_i
$$
be two decompositions of $E$ into direct sum of indecomposable equivariant
vector bundles. Then $m\,=\, n$, and there is a permutation $\sigma$ of
$\{1\, ,\cdots\, , m\}$ such that the equivariant vector bundle $E_i$ is
isomorphic to the equivariant vector bundle $F_{\sigma(i)}$ for every $i\, \in\, \{1\, ,\cdots \, ,m\}$.
\end{corollary}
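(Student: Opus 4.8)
The plan is to verify that the additive category of $\mathcal{T}$--equivariant holomorphic vector bundles on $X$ satisfies the hypotheses of the Krull--Schmidt theorem in the form proved by Atiyah (for additive categories with finite dimensional morphism spaces), and then to invoke that theorem. Throughout, the morphisms are taken to be $\mathcal{T}$--equivariant bundle homomorphisms.

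First I would record the relevant finiteness facts. For any two equivariant bundles $E$ and $F$, the space of equivariant homomorphisms is the space of $\mathcal{T}$--invariants in $H^0(X,\, \text{Hom}(E,F))$; since $X$ is complete this cohomology group is a finite dimensional complex vector space, so $\text{Hom}^{\mathcal{T}}(E,F)$ is finite dimensional over $\CC$. In particular the endomorphism algebra $A_E\, :=\, \text{End}^{\mathcal{T}}(E)$ is a finite dimensional $\CC$--algebra. Moreover, a direct sum decomposition of $E$ into nonzero equivariant subbundles has at most $\text{rank}(E)$ terms, so every equivariant bundle admits a decomposition into finitely many indecomposable equivariant bundles; this already gives the existence of the decompositions appearing in the statement. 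Next I would check that idempotents split: given an idempotent $e\,\in\, A_E$, the kernel and image of $e$ are equivariant subbundles of $E$ (over the smooth variety $X$ an idempotent endomorphism of a vector bundle has locally free kernel and image, and equivariance is inherited), and $E\,=\,\ker(e)\oplus \text{im}(e)$ as equivariant bundles. Consequently $E$ is indecomposable if and only if $A_E$ contains no idempotent other than $0$ and $1$.

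The key step is to show that $A_E$ is a \emph{local} ring whenever $E$ is indecomposable. Here $A_E$ is a finite dimensional algebra over the algebraically closed field $\CC$, so its Jacobson radical $\text{rad}(A_E)$ is nilpotent and the quotient $A_E/\text{rad}(A_E)$ is semisimple. Because idempotents lift modulo a nilpotent ideal, the absence of nontrivial idempotents in $A_E$ (which is equivalent to indecomposability of $E$) forces $A_E/\text{rad}(A_E)$ to have no nontrivial idempotents either; a semisimple algebra with this property is a division algebra, and being finite dimensional over the algebraically closed field $\CC$ it must equal $\CC$. Hence $A_E$ is local, with unique maximal ideal $\text{rad}(A_E)$. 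This is consistent with the earlier remark that indecomposability is detected by the maximal torus of $\text{Aut}^{\mathcal{T}}(E)$ being of dimension one.

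With finite dimensional morphism spaces, split idempotents, and local endomorphism rings for the indecomposable objects, Atiyah's Krull--Schmidt theorem applies directly and yields $m\,=\,n$ together with a permutation $\sigma$ of $\{1\, ,\ldots\, ,m\}$ for which $E_i$ is equivariantly isomorphic to $F_{\sigma(i)}$. The one point that requires genuine care, and hence the main obstacle, is the locality of $A_E$ for indecomposable $E$: all of the categorical bookkeeping reduces to it, and it rests in turn on $\CC$ being algebraically closed, so that the residue division algebra $A_E/\text{rad}(A_E)$ collapses to $\CC$.
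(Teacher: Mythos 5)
Your proof is correct, but it takes a genuinely different route from the paper. The paper offers no self-contained argument at all: it simply notes that the corollary follows from Theorem 4.1 of \cite{BP} (a result on equivariant reduction of structure group of a principal bundle to a Levi subgroup, which encodes the conjugacy statement behind the remark preceding the corollary about maximal tori of $\text{Aut}^{\mathcal T}(E)$), and alternatively from Klyachko's classification of equivariant bundles on toric varieties in \cite{Kl}, where uniqueness of decomposition reduces to linear algebra of the classifying filtrations. You instead verify directly that the category of $\mathcal T$--equivariant bundles is a Krull--Schmidt category: finite dimensional Hom spaces (invariants in $H^0(X,\, Hom(E,F))$, finite dimensional by completeness of $X$), splitting of equivariant idempotents (their rank is locally constant, so kernel and image are equivariant subbundles), and locality of $\text{End}^{\mathcal T}(E)$ for indecomposable $E$ via lifting idempotents modulo the radical --- after which the Atiyah/Azumaya exchange theorem (the paper's own reference \cite{At}, used elsewhere in Section 4) gives the conclusion. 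What your approach buys is generality and transparency: nothing in it is toric, so it proves the same uniqueness statement for equivariant bundles under any connected algebraic group acting on any complete variety, in particular for the $G$--pairs of Section 3; what the paper's citations buy is brevity and a tighter link to the structure theory of the automorphism group. One small correction to your closing remark: locality of $A_E$ does \emph{not} rest on $\CC$ being algebraically closed. Once $A_E/\text{rad}(A_E)$ is semisimple with no nontrivial idempotents it is a division algebra over any base field, and that already makes $A_E$ local; algebraic closedness only serves to identify this division algebra with $\CC$ itself, which the Krull--Schmidt machinery never needs.
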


\section{Equivariant bundles on $G$--pairs} \label{sec:3}

Let $G$ be a connected complex affine algebraic group.

Let $X$ be a smooth complete complex variety equipped with an action
of $G$. Let $D\,\subset\, X$ be a simple normal crossing divisor
preserved by the action of $G$ on $X$.
This $(X\, ,D)$ is called a $G$--pair.

The Lie algebra of $G$ will be denoted by $\mathfrak g$. Let
$X\times \mathfrak{g}$ be the trivial
holomorphic vector bundle on $X$ with fiber $\mathfrak{g}$.
For a $G$--pair $(X\, ,D)$, we have a homomorphism of coherent sheaves
\begin{equation}\label{f1}
\mathrm{op}_{X,D}\,:\,X\times \mathfrak{g}\,\longrightarrow\, {T}X(-\log D)
\end{equation}
induced by the action of $G$.

The $G$--pair $(X\, ,D)$ is called \textit{log parallelizable} if the
homomorphism $\mathrm{op}_{X,D}$ in \eqref{f1} is an isomorphism.

An equivariant vector bundle on a $G$--pair $(X\, ,D)$ is a holomorphic
vector bundle on $X$ equipped with a lift of the action of $G$ on $X$.

\begin{proposition}\label{prop:cor-G}
Let $(X\, ,D)$ be a log parallelizable $G$--pair. Let $E$ be a
$G$--equivariant vector bundle on $X$. Then $E$ has a natural integrable
logarithmic connection singular over $D$.
\end{proposition}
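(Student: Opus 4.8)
The plan is to follow the proof of Proposition~\ref{prop2} essentially verbatim, with the torus $\mathcal{T}$ and its Lie algebra $\mathfrak{t}$ replaced throughout by $G$ and $\mathfrak{g}$, and with the triviality of $TX(-\log D)$ coming from \cite{Wi} replaced by the log parallelizable hypothesis. The point is that the only structural input used in Proposition~\ref{prop2} was the fact that $\beta$ in \eqref{e3} is an isomorphism; here the log parallelizable hypothesis supplies this directly, since it says precisely that $\mathrm{op}_{X,D}$ in \eqref{f1} is an isomorphism from $V\,:=\,X\times\mathfrak{g}$ onto $TX(-\log D)$.

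First I would construct the analogue of the homomorphism $\gamma$. The lift of the $G$--action to $E$ induces an action of $G$ on the total space of the principal $\text{GL}(r,{\mathbb C})$--bundle $E_{\rm GL}$ of \eqref{f}; differentiating produces a $\text{GL}(r,{\mathbb C})$--equivariant ${\mathcal O}_{E_{\rm GL}}$--linear homomorphism $E_{\rm GL}\times\mathfrak{g}\,\longrightarrow\,TE_{\rm GL}$, which descends to an ${\mathcal O}_X$--linear homomorphism $V\,\longrightarrow\,\text{At}(E)$, exactly as in \eqref{e6}. Because the $G$--action preserves $D$, the image of the composite $V\,\longrightarrow\,\text{At}(E)\,\stackrel{\overline{\eta}}{\longrightarrow}\,TX$ lands in $TX(-\log D)$, so this homomorphism factors through $\text{At}(E)(-\log D)\,=\,\overline{\eta}^{-1}(TX(-\log D))$ of \eqref{fd1}, yielding $\gamma\,:\,V\,\longrightarrow\,\text{At}(E)(-\log D)$ as in \eqref{e7}.

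Next I would set $\nabla\,:=\,\gamma\circ\mathrm{op}_{X,D}^{-1}\,:\,TX(-\log D)\,\longrightarrow\,\text{At}(E)(-\log D)$, which makes sense because $\mathrm{op}_{X,D}$ is an isomorphism. Since the $G$--action on $E$ lifts the action on $X$, the commutative square relating \eqref{e6} and the $X$--action gives $\eta\circ\gamma\,=\,\mathrm{op}_{X,D}$, whence $\eta\circ\nabla\,=\,\text{Id}_{TX(-\log D)}$ with $\eta$ as in \eqref{e5}. Thus $\nabla$ is a holomorphic splitting of \eqref{e5}, that is, a logarithmic connection on $E$ singular over $D$.

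The one place where the argument genuinely differs from the toric case, and which I expect to be the main (though mild) obstacle, is integrability, because $\mathfrak{g}$ need not be abelian. The key is that both $\mathrm{op}_{X,D}$ and $\gamma$, restricted to the constant sections $\mathfrak{g}\,\subset\,H^0(X,\,V)$, are homomorphisms of Lie algebras, the first into vector fields on $X$ under the Lie bracket and the second into the Atiyah algebra (recall from \eqref{e5} that the bracket on $\text{At}(E)$ preserves the sections of $\text{At}(E)(-\log D)$). The curvature of $\nabla$, which measures the failure of $s\,\longmapsto\,\nabla(s)$ to preserve brackets, is ${\mathcal O}_X$--bilinear and antisymmetric, hence determined by its values on any frame. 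Taking a basis $\xi_1,\ldots,\xi_d$ of $\mathfrak{g}$, the sections $\mathrm{op}_{X,D}(\xi_i)$ form a global frame of $TX(-\log D)$ on which $\nabla(\mathrm{op}_{X,D}(\xi_i))\,=\,\gamma(\xi_i)$; since $\mathrm{op}_{X,D}([\xi_i,\xi_j])\,=\,[\mathrm{op}_{X,D}(\xi_i),\mathrm{op}_{X,D}(\xi_j)]$ and $\gamma([\xi_i,\xi_j])\,=\,[\gamma(\xi_i),\gamma(\xi_j)]$, the curvature vanishes on this frame, and therefore identically. Hence $\nabla$ is integrable. Everything else transcribes directly from Proposition~\ref{prop2}.
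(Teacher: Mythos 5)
Your proposal is correct and takes essentially the same approach as the paper: the paper's own proof simply states that the argument of Proposition~\ref{prop2} goes through verbatim with $\beta$ replaced by the isomorphism $\mathrm{op}_{X,D}$ of \eqref{f1}, observing that the abelianness of $\mathfrak{t}$ was never used. Your explicit verification of integrability (tensoriality of the curvature plus vanishing on the global frame $\mathrm{op}_{X,D}(\xi_i)$) is exactly the detail implicit in the paper's remark that the Lie-bracket compatibility argument survives when $\mathfrak{g}$ is nonabelian.
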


\begin{proof}
The proof is identical to the proof of Proposition \ref{prop2}.
The isomorphism $\beta$ used in the proof of Proposition \ref{prop2}
is now replaced by the isomorphism $\mathrm{op}_{X,D}$ in
\eqref{f1}. Note that
the abelianness of the Lie algebra $\mathfrak t$ does not play
any r\^ole in the proof of Proposition \ref{prop2}.
\end{proof}

The following result is an analog of
Theorem \ref{thm1-1}.

\begin{theorem}\label{thm:prop-G-1}
Assume that the group $G$ is simply connected. Let $(X\, ,D)$ be a log
parallelizable $G$--pair. Let $E$ be a holomorphic vector bundle on $X$.
The following two statements are equivalent:
\begin{enumerate}
\item The vector bundle $E$ admits an equivariant structure.

\item The vector bundle $E$ admits an integrable logarithmic
connection singular over $D$.
\end{enumerate}
\end{theorem}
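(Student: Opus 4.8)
The plan is to imitate the proof of Theorem \ref{thm1-1}, but to replace the appeal to Klyachko's criterion---which is unavailable here, as the example of Section \ref{ex} shows---by an integration argument exploiting the hypothesis that $G$ is simply connected. The implication $(1)\Rightarrow(2)$ is exactly Proposition \ref{prop:cor-G}, so the entire content of the theorem is the reverse implication $(2)\Rightarrow(1)$.

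So suppose $E$ admits an integrable logarithmic connection $\nabla\,:\,TX(-\log D)\,\longrightarrow\,\text{At}(E)(-\log D)$ splitting the sequence \eqref{e5}. Exactly as in the toric case, I would form the complex Lie group ${\mathcal G}_E$ of pairs $(f\, ,\phi)$ with $f\,\in\,\text{Aut}(X)$ and $\phi$ a fiberwise-linear automorphism of $E$ covering $f$; since $X$ is complete, $H^0(X,\,\text{At}(E))$ is finite dimensional, so ${\mathcal G}_E$ is a finite-dimensional complex Lie group with $\text{Lie}({\mathcal G}_E)\,=\,H^0(X,\,\text{At}(E))$, and the forgetful homomorphism $p\,:\,{\mathcal G}_E\,\longrightarrow\,\text{Aut}(X)$ has differential induced by $\overline\eta$ of \eqref{e9}. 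Taking global sections of $\nabla$ and precomposing with the isomorphism $\mathrm{op}_{X,D}$ of \eqref{f1}---which identifies $\mathfrak g$ with $H^0(X,\,TX(-\log D))$ because $X$ complete forces $H^0(X,\,{\mathcal O}_X)\,=\,\mathbb C$---produces a linear map $\widetilde\rho\,:\,\mathfrak g\,\longrightarrow\,H^0(X,\,\text{At}(E))\,=\,\text{Lie}({\mathcal G}_E)$.

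Next I would check that $\widetilde\rho$ is a homomorphism of Lie algebras. This is where integrability enters: by definition the integrable connection $\nabla$ respects the Lie brackets on the sheaves of sections of $TX(-\log D)$ and of $\text{At}(E)(-\log D)$, and $\mathrm{op}_{X,D}$ carries $\mathfrak g$ to the corresponding Lie subalgebra of logarithmic vector fields, so the composite $\widetilde\rho$ is bracket preserving (the remark following Proposition \ref{prop:cor-G} that abelianness is irrelevant is exactly what makes this step legitimate for nonabelian $G$). Moreover, since $\eta\circ\nabla\,=\,\text{Id}$, the composite $(dp)\circ\widetilde\rho$ equals the infinitesimal action $a\,:\,\mathfrak g\,\longrightarrow\,H^0(X,\,TX)\,=\,\text{Lie}(\text{Aut}(X))$, just as in the toric argument.

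Finally comes the step that genuinely uses the hypothesis and is the main obstacle. Because $G$ is simply connected and ${\mathcal G}_E$ is a finite-dimensional complex Lie group, the Lie algebra homomorphism $\widetilde\rho$ integrates to a holomorphic group homomorphism $\widehat\rho\,:\,G\,\longrightarrow\,{\mathcal G}_E$. Then $p\circ\widehat\rho$ and the action homomorphism $\sigma\,:\,G\,\longrightarrow\,\text{Aut}(X)$ have the same differential $a$, hence coincide on the connected group $G$; writing $\widehat\rho(g)\,=\,(\sigma(g)\, ,\phi_g)$, the homomorphism property of $\widehat\rho$ makes $g\,\longmapsto\,\phi_g$ a lift of the $G$--action to $E$, i.e.\ an equivariant structure. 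The integration of $\widetilde\rho$ to $\widehat\rho$ is the crux: without simple connectivity one can only exponentiate one-parameter subgroups, recovering the infinitesimal data but not a global homomorphism out of $G$, and the example of Section \ref{ex} shows that the conclusion indeed fails for non-simply-connected $G$.
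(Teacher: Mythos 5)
Your proposal is correct and follows essentially the same route as the paper's own proof: the forward implication via Proposition \ref{prop:cor-G}, and the converse by forming ${\mathcal G}_E$, using integrability of $\nabla$ together with the isomorphism $\mathrm{op}_{X,D}$ to obtain a Lie algebra homomorphism ${\mathfrak g}\,\longrightarrow\,\mathrm{Lie}({\mathcal G}_E)$, and integrating it to a group homomorphism $G\,\longrightarrow\,{\mathcal G}_E$ via simple connectivity. Your final paragraph merely spells out, via the equality of $p\circ\widehat\rho$ and the action homomorphism on the connected group $G$, what the paper dismisses with ``evidently defines an equivariant structure,'' which is a welcome but not substantively different addition.
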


\begin{proof}
The second statement follows from the first statement by
Proposition \ref{prop:cor-G}.

To prove the converse, assume that the vector bundle $E$ admits an
integrable logarithmic connection singular over $D$.

As in the proof of Theorem \ref{thm1-1}, let ${\mathcal G}_E$ be the
complex group consisting of all pairs of the form $(f\, , \phi)$, where
$f\,\in\, \text{Aut}(X)$ and $\phi$ is a holomorphic automorphism of the
vector bundle $E$ over $f$. Consider
$$
dp\, :\, \text{Lie}({\mathcal G}_E)\,=\, H^0(X, \, \text{At}(E))\,
\longrightarrow\, \text{Lie}({\rm Aut}(X))\,=\, H^0(X,\, TX)
$$
constructed in \eqref{e8}. Let $\nabla\,:\, TX(-\log D)\,\longrightarrow\,
\text{At}(E)(-\log D)$ be a logarithmic connection on $E$ singular over
$D$. Let
 \begin{equation}\label{above}
\nabla'\,:\, H^0(X,\, TX(-\log D))\,\longrightarrow\,
H^0(X,\, \text{At}(E)(-\log D))
 \end{equation}
be the homomorphism induced by $\nabla$. As $\nabla$ is an integrable
connection, the map in \eqref{above} is a homomorphism of Lie algebras.

Using the isomorphism $\mathrm{op}_{X,D}$ in \eqref{f1}, the Lie algebra
$\mathfrak g$ of $G$ is
identified with the Lie algebra $H^0(X,\, TX(-\log D))$.
Therefore, we get a homomorphism
of Lie algebras
$$
{\mathfrak g}\,=\, H^0(X,\, TX(-\log D))\,
\stackrel{\nabla'}{\longrightarrow}\, H^0(X,\, \text{At}(E)(-\log D))
\, \hookrightarrow\, H^0(X,\, \text{At}(E))\,=\, \text{Lie}({\mathcal
G}_E)\, .
$$
Since $G$ is simply connected, this homomorphism of Lie algebras
${\mathfrak g}\,\longrightarrow\, \text{Lie}({\mathcal
G}_E)$ integrates into a homomorphism of Lie groups
$$
\beta\, :\, G\, \longrightarrow\, {\mathcal G}_E\, .
$$
This homomorphism $\beta$ evidently defines an equivariant structure
on $E$.
\end{proof}

The condition in Theorem \ref{thm:prop-G-1} that $G$ is simply connected
is necessary; see Section \ref{ex} below.

The proof that the third statement in Theorem \ref{thm1-1} implies the
first statement in Theorem \ref{thm1-1} breaks down for log parallelizable
$G$--pair due to the following reason: Proposition 1.2.1 of \cite{Kl},
which is crucially used in the proof of Theorem \ref{thm1-1}, is not
valid for a general log parallelizable $G$--pair; see Section \ref{ex} below
for such an example. However, if $G$ is semisimple,
then the analog of the third statement in Theorem \ref{thm1-1} implies the
analog of the second statement, as is shown in the following proposition.

\begin{proposition}\label{prop-s}
Assume that $G$ is semisimple. Let $(X\, ,D)$ be a log parallelizable $G$--pair. 
Let $E$ be a holomorphic vector bundle on $X$ admitting a logarithmic connection
singular over $D$. Then $E$ admits an integrable logarithmic connection
singular over $D$.
\end{proposition}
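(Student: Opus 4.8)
The plan is to show that the existence of some logarithmic connection on $E$ produces an \emph{integrable} one when $G$ is semisimple. The obstruction to integrability of a logarithmic connection $\nabla$ singular over $D$ is its curvature, which is a holomorphic section of $\mathrm{End}(E)\otimes\bigwedge^2\Omega_X(\log D)$. Equivalently, using the isomorphism $\mathrm{op}_{X,D}$ of \eqref{f1}, a logarithmic connection is a splitting of \eqref{e5}, i.e.\ a homomorphism $\nabla\colon TX(-\log D)\to \mathrm{At}(E)(-\log D)$ with $\eta\circ\nabla=\mathrm{Id}$, and integrability is the statement that the induced map on global sections $\nabla'\colon H^0(X,TX(-\log D))\to H^0(X,\mathrm{At}(E)(-\log D))$ is a Lie algebra homomorphism after identifying $H^0(X,TX(-\log D))\cong\mathfrak g$. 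So the real content is to pass from an arbitrary splitting to one respecting the bracket of $\mathfrak g$.

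First I would reformulate the problem purely in terms of the Lie algebra $\mathfrak g$. Applying $H^0(X,-)$ to the exact sequence \eqref{e5} gives a sequence
\begin{equation*}
0\,\longrightarrow\, H^0(X,\mathrm{End}(E)(-\log D))\,\longrightarrow\, H^0(X,\mathrm{At}(E)(-\log D))\,\stackrel{\eta_*}{\longrightarrow}\, H^0(X,TX(-\log D))\,\cong\,\mathfrak g
\end{equation*}
in which the existence of a logarithmic connection guarantees that $\eta_*$ is surjective (the connection provides a set-theoretic, indeed linear, section). Write $A:=H^0(X,\mathrm{At}(E)(-\log D))$; this is a Lie algebra because the bracket on $\mathrm{At}(E)$ preserves the logarithmic subsheaf, and $I:=H^0(X,\mathrm{End}(E)(-\log D))$ is an ideal with $A/I\cong\mathfrak g$. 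Thus I have a central—or at least an abelian-kernel—extension of Lie algebras, and giving an \emph{integrable} logarithmic connection is exactly giving a Lie algebra splitting $\mathfrak g\to A$ of $\eta_*$.

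The key step is then a cohomological vanishing argument for Lie algebras. Because $\mathfrak g$ is semisimple, Whitehead's lemma applies: the extension
\begin{equation*}
0\,\longrightarrow\, I\,\longrightarrow\, A\,\longrightarrow\,\mathfrak g\,\longrightarrow\, 0
\end{equation*}
of Lie algebras with $\mathfrak g$ semisimple splits as a sequence of Lie algebras, since $H^2(\mathfrak g,I)=0$ for any finite-dimensional $\mathfrak g$-module $I$ (the splitting is the Lie-algebra analog of the Levi decomposition / the second Whitehead lemma). Concretely, the existence of a linear splitting determines a $2$-cocycle measuring the failure of integrability, and semisimplicity forces this cocycle to be a coboundary; adjusting the linear splitting by the corresponding $1$-cochain yields a Lie algebra section $s\colon\mathfrak g\to A$. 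This $s$ is precisely an integrable logarithmic connection $\nabla'$ in the sense used in the proof of Theorem \ref{thm:prop-G-1}, and reversing the identification of $\mathfrak g$ with $H^0(X,TX(-\log D))$ recovers an integrable logarithmic connection on $E$ singular over $D$.

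**The hard part will be** verifying that the finite-dimensionality and module structure hypotheses of Whitehead's lemma are genuinely met, and that the $\mathfrak g$-action on $I=H^0(X,\mathrm{End}(E)(-\log D))$ arising from the extension is the honest adjoint-type action to which the cohomological vanishing applies. Both $A$ and $I$ are finite-dimensional because $X$ is complete, so $H^0$ of a coherent sheaf is finite-dimensional; this I would state and use. One subtlety to handle carefully is that the linear section coming from a merely logarithmic (non-integrable) connection need not be $\mathfrak g$-equivariant, so I cannot directly read off a cocycle valued in the module $I$—I would instead argue abstractly that \emph{any} Lie algebra extension of a semisimple $\mathfrak g$ by an ideal $I$ (with $I$ not necessarily central) admits a Lie algebra splitting, which is exactly the Levi–Whitehead statement, and this is what guarantees the integrable splitting exists regardless of the starting section.
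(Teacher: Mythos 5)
Your proof is correct and takes essentially the same route as the paper: both pass to global sections of \eqref{e5}, observe that the given connection makes the induced map onto $H^0(X,\,TX(-\log D))\,\cong\,\mathfrak g$ surjective, invoke the splitting theorem for surjections of (finite-dimensional) Lie algebras onto a semisimple one to obtain a Lie algebra section, and convert that section back into an integrable logarithmic connection using the triviality of $TX(-\log D)$ with fiber $\mathfrak g$. The paper's citation (Bourbaki, Corollaire 3) is exactly the Levi--Malcev statement you correctly fall back on in your last paragraph, rather than the abelian-kernel Whitehead lemma you first mention, since the kernel $H^0(X,\,End(E)(-\log D))$ need not be abelian.
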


\begin{proof}
Consider the composition
$$
\text{At}(E)(-\log D)\,\stackrel{\eta}{\longrightarrow}\,
TX(-\log D)\,\stackrel{(\mathrm{op}_{X,D})^{-1}}{\longrightarrow}\,
X\times \mathfrak{g}\, ,
$$
where $\eta$ and $\mathrm{op}_{X,D}$ constructed in \eqref{e5} and \eqref{f1}
respectively. We will denote this composition by $\eta'$. Let
$$
{\widetilde\eta}\, :\, H^0(X,\, \text{At}(E)(-\log D))\,\longrightarrow\,
H^0(X,\,X\times \mathfrak{g})\,=\, \mathfrak g
$$
be the homomorphism given by $\eta'$.

Let $\nabla\, :\, TX(-\log D)\,\longrightarrow\, \text{At}(E)(-\log D)$
be a logarithmic connection on $E$ singular over $D$. We note that
$$
\eta'\circ\nabla\circ\mathrm{op}_{X,D}\,=\,\text{Id}_{X\times \mathfrak{g}}\, ,
$$
where $\mathrm{op}_{X,D}$ is the isomorphism in \eqref{f1}. This implies
that the above homomorphism ${\widetilde\eta}$ is surjective.

The surjective homomorphism ${\widetilde\eta}$ is a homomorphism of Lie
algebras, and $\mathfrak g$ is semisimple. Therefore, there is a homomorphism
of Lie algebras
$$
\theta\, :\, {\mathfrak g}\, \longrightarrow\,
H^0(X,\, \text{At}(E)(-\log D))
$$
such that ${\widetilde\eta}\circ\theta\,=\,\text{Id}_{\mathfrak g}$
(cf.\ \cite[p. 91, Corollaire 3]{Bou}).

The above homomorphism $\theta$ produces a homomorphism of ${\mathcal
O}_X$--coherent sheaves
$$
\nabla'\, :\, TX(-\log D)\,\longrightarrow\, \text{At}(E)(-\log D)
$$
because $TX(-\log D)$ is the trivial vector bundle with fiber $\mathfrak g$
(see \eqref{f1}). Clearly, $\nabla'$ is a logarithmic connection on $E$
singular over $D$. This logarithmic connection is integrable because
$\theta$ is a homomorphism of Lie algebras.
\end{proof}

Theorem \ref{thm:prop-G-1} and Proposition \ref{prop-s} together have
the following corollary:

\begin{corollary}\label{cor-s}
Assume that $G$ is semisimple and simply connected.
Let $(X\, ,D)$ be a log parallelizable $G$--pair. Let $E$ be a holomorphic
vector bundle on $X$. Then the following three statements are equivalent:
\begin{enumerate}
\item The holomorphic vector bundle $E$ admits an equivariant structure.

\item The holomorphic vector bundle $E$ admits an integrable logarithmic
connection singular over $D$.

\item The holomorphic vector bundle $E$ admits a logarithmic connection
singular over $D$.
\end{enumerate}
\end{corollary}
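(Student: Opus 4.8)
The plan is to assemble the corollary directly from the two results already in hand, chaining them into a cycle of implications. Since $G$ is assumed simply connected, Theorem~\ref{thm:prop-G-1} immediately supplies the equivalence of statements (1) and (2). It therefore remains only to bring statement (3) into the loop, which I would accomplish by establishing the two implications (2)$\Rightarrow$(3) and (3)$\Rightarrow$(2) separately.

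The implication (2)$\Rightarrow$(3) is immediate and requires no hypotheses: an integrable logarithmic connection singular over $D$ is in particular a logarithmic connection singular over $D$, since integrability is an additional condition imposed on a holomorphic splitting of the Atiyah exact sequence \eqref{e5}, not an obstruction to the existence of such a splitting. For the converse (3)$\Rightarrow$(2), I would invoke Proposition~\ref{prop-s}, whose hypotheses are satisfied because $G$ is semisimple and $(X,D)$ is log parallelizable by assumption. That proposition asserts precisely that a vector bundle admitting a logarithmic connection singular over $D$ admits an integrable one; its substantive content is that the surjection $\widetilde\eta$ of Lie algebras splits as a map of Lie algebras, which holds because $\mathfrak g$ is semisimple, thereby producing the integrability.

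Putting these together, the three statements are linked by (1)$\Leftrightarrow$(2), (2)$\Rightarrow$(3), and (3)$\Rightarrow$(2), which establishes their mutual equivalence. There is no genuine obstacle at the level of the corollary itself, since all the real work is imported from the two cited results: simple connectivity of $G$ is what allows the integration of a Lie algebra homomorphism into a homomorphism of Lie groups in the (2)$\Rightarrow$(1) direction of Theorem~\ref{thm:prop-G-1}, while semisimplicity of $\mathfrak g$ is what permits the splitting of the Lie algebra extension in the (3)$\Rightarrow$(2) direction of Proposition~\ref{prop-s}. Both hypotheses on $G$ are thus used, each exactly once, and together they close the cycle.
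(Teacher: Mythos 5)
Your proof is correct and follows exactly the paper's route: the authors likewise obtain the corollary by combining Theorem~\ref{thm:prop-G-1} (giving (1)$\Leftrightarrow$(2) via simple connectivity) with Proposition~\ref{prop-s} (giving (3)$\Rightarrow$(2) via semisimplicity), the implication (2)$\Rightarrow$(3) being trivial. Nothing is missing.
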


\subsection{An example}\label{ex}

Take $G\,=\, \text{PGL}(2, {\mathbb C})$. Let ${\mathbb P}(M(2,{\mathbb C}))$
be the projective space parametrizing
lines in the $2\times 2$ complex matrices. Let
$$
f\, :\, \text{PGL}(2, {\mathbb C})\, \longrightarrow\,
{\mathbb P}(M(2,{\mathbb C}))\,=\, {\mathbb C}{\mathbb P}^3
$$
be the natural embedding defined by the inclusion map of
$\text{GL}(2, {\mathbb C})$ in the vector space $M(2,{\mathbb C})$.
The right and left translation
actions of $\text{GL}(2, {\mathbb C})$ on
$M(2,{\mathbb C})$ produce respectively right and left actions of
$\text{PGL}(2,
{\mathbb C})$ on ${\mathbb P}(M(2,{\mathbb C}))$. The above embedding $f$
is the wonderful compactification of $\text{PGL}(2, {\mathbb C})$
(see \cite{DP1}, \cite{DP2}), but we do not use this.

Let
$$
D\, :=\, {\mathbb P}(M(2,{\mathbb C})) \setminus f(\text{PGL}(2,
{\mathbb C}))\, \subset\, {\mathbb P}(M(2,{\mathbb C}))\,:=\, X
$$
be the boundary divisor. We will show that the vector bundle
$TX(-\log D)$ is holomorphically trivial.

Note that $D$ is a smooth hypersurface of degree two.
Let $\iota\, :\, D\, \hookrightarrow\, X$ be the inclusion map.
Consider the short exact sequence of coherent sheaves on $X$
$$
0\, \longrightarrow\, TX(-\log D)\, \longrightarrow\, TX
\, \longrightarrow\, \iota_* N_D \,=\, {\mathcal O}_X(D)\vert_D
\, \longrightarrow\, 0\, ,
$$
where $N_D$ is the normal bundle of $D$; the above isomorphism
$$\iota_* N_D \,=\, {\mathcal O}_X(D)\vert_D$$ is the Poincar\'e adjunction
formula. Since the degree of $D$ is
two, we have
$$
\text{degree}({\mathcal O}_X(D)\vert_D)\,=\, 4\, .
$$
Hence from the
above short exact sequence it follows that
$$
\text{degree}(TX(-\log D))\,=\, 0\, .
$$

Let $V\,=\, X\times {\mathfrak{s}\mathfrak{l}} (2,{\mathbb C})$ be the trivial
holomorphic vector bundle over $X$ with fiber
${\mathfrak{s}\mathfrak{l}} (2,{\mathbb C})\,=\,
\text{Lie}(\text{PGL}(2, {\mathbb C}))$.
The action of $\text{PGL}(2, {\mathbb C})$ on $X$ defines a homomorphism
$$
\beta\, :\, V\, \longrightarrow\, TX(-\log D)\, .
$$
Let
$$
\bigwedge\nolimits^2 \beta\, :\, \bigwedge\nolimits^2 V\,=\, {\mathcal O}_X
\, \longrightarrow\, \bigwedge\nolimits^2 (TX(-\log D)\,=\, {\mathcal O}_X
$$
be the homomorphism induced by $\beta$. Since the homomorphism
$\bigwedge^2 \beta$ is an isomorphism over the complement of $D$, it follows
that $\bigwedge^2 \beta$ is an isomorphism over $X$ (a somewhere nonzero
holomorphic function on $X$ is nowhere vanishing). Therefore, $\beta$
is an isomorphism over $X$.

Consequently, $(X\, ,D)$ is a log parallelizable
$\text{PGL}(2, {\mathbb C})$--pair.

Let $L\,:=\, {\mathcal O}_{{\mathbb P}(M(2,{\mathbb C}))}(-1)$
be the tautological line bundle on $X$. We will show that $L$ admits
an integrable logarithmic connection singular over $D$. For this,
note that the line bundle $L^{\otimes 2}\,=\, {\mathcal O}_X(-D)$
has a tautological integrable logarithmic connection, singular over $D$,
given by the de Rham differential $\alpha\, \longmapsto\,
d\alpha$. For any holomorphic line bundle $\xi$, and any nonzero
integer $m$, a logarithmic connection on $\xi$ induces a logarithmic
connection on $\xi^{\otimes m}$; moreover, this map is a bijection
between the logarithmic connections on $\xi$ and logarithmic connections
on $\xi^{\otimes m}$. Therefore, the above logarithmic connection on
$L^{\otimes 2}$ induces an integrable logarithmic connection
on $L$ singular over $D$.

As before, let ${\mathcal G}_L$ be the
complex group consisting of all pairs of the form $(h\, , \phi)$, where
$h\,\in\, \text{Aut}(X)$ and $\phi$ is a holomorphic automorphism of the
line bundle $L$ over the automorphism $h$ of $X$.
It is easy to see that ${\mathcal G}_L$ is
identified with $\text{GL}(2, {\mathbb C})$. Indeed, the tautological
action of $\text{GL}(2, {\mathbb C})$ on ${\mathcal
O}_{{\mathbb P}(M(2,{\mathbb C}))}(-1)$ produces this isomorphism.

The holomorphic line bundle $L$ does not admit an equivariant structure.
This follows immediately from the fact that there is no nontrivial
homomorphism from $\text{PGL}(2, {\mathbb C})$ to $\text{GL}(2, {\mathbb C})$
(in particular, the projection homomorphism
$$
\text{GL}(2, {\mathbb C})\,\longrightarrow\, \text{PGL}(2, {\mathbb C})
$$
does not split). Therefore, the condition in Theorem \ref{thm:prop-G-1}
that $G$ is simply connected is essential.

The pullback of $L\,=\, {\mathcal O}_{{\mathbb P}(M(2,{\mathbb C}))}(-1)$
by any automorphism of ${\mathbb P}(M(2,{\mathbb C}))$ is holomorphically
isomorphic to $L$. But, as we have seen above, the
holomorphic line bundle $L$ does not admit an equivariant structure.
Therefore, Proposition 1.2.1 of \cite{Kl} is not
valid for a general log parallelizable $G$--pair.

\section{Semistability and restriction to invariant curves} \label{sec:4}

Let $X$ be a complete toric variety defined over an algebraically
closed field $k$. We do not make any assumption on the characteristic
of $k$. By an invariant curve in $X$ we will mean pair
$(C\, ,f)$, where $C$ is an irreducible smooth projective curve defined
over $k$, and $f\, :\, C\,\longrightarrow\, X$ is a separable morphism,
which is an embedding over a nonempty open subset of $C$, such that
$f(C)$ is preserved by the action of the torus on $X$. If $(C\, ,f)$
is an invariant curve, then $C$ is a rational curve.

\begin{proposition}\label{prop1}
Let $E$ be an equivariant vector bundle of rank $r$ over $X$. The
following two statements are equivalent:
\begin{enumerate}
\item For every invariant curve $(C\, ,f)$, the pullback $f^*E$
is semistable.

\item There is a line bundle $L$ over $X$ such that the
vector bundle $E$ is isomorphic to $L^{\oplus r}$.
\end{enumerate}
\end{proposition}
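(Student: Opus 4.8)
The plan is to prove the easy implication (2) $\Rightarrow$ (1) by hand, and to reduce (1) $\Rightarrow$ (2) to the triviality criterion of \cite{HMP} applied not to $E$ itself but to its endomorphism bundle $\mathcal{E}nd(E)\,=\,E\otimes E^*$. For (2) $\Rightarrow$ (1): if $E\,\cong\,L^{\oplus r}$, then for any invariant curve $(C\, ,f)$ we have $f^*E\,\cong\,(f^*L)^{\oplus r}$, a direct sum of copies of a single line bundle on the smooth rational curve $C$, and such a bundle is semistable (indeed polystable). So statement (1) holds.

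For (1) $\Rightarrow$ (2), the first step is to read off, on each invariant curve, what semistability says. Since every invariant curve $C$ is rational (as recalled just before the statement), Grothendieck's theorem gives $f^*E\,\cong\,\bigoplus_i \mathcal{O}_C(a_i)$, and semistability of $f^*E$ forces all $a_i$ to be equal, say to $a\,=\,a(C)$; thus $f^*E\,\cong\,\mathcal{O}_C(a)^{\oplus r}$. Consequently $f^*\mathcal{E}nd(E)\,=\,\mathcal{E}nd(f^*E)\,\cong\,\mathcal{O}_C(a)^{\oplus r}\otimes\mathcal{O}_C(-a)^{\oplus r}\,\cong\,\mathcal{O}_C^{\oplus r^2}$ is trivial for \emph{every} invariant curve. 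Because $\mathcal{E}nd(E)$ is again an equivariant vector bundle, the triviality criterion of \cite{HMP} (a $\mathcal{T}$-equivariant bundle whose restriction to every invariant curve is trivial is itself trivial) yields $\mathcal{E}nd(E)\,\cong\,\mathcal{O}_X^{\oplus r^2}$.

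It remains to reconstruct $L$ from the triviality of $\mathcal{E}nd(E)$; this step is purely algebraic and characteristic-free. Set $A\,=\,H^0(X,\mathcal{E}nd(E))\,=\,\mathrm{End}(E)$. The isomorphism $\mathcal{E}nd(E)\,\cong\,\mathcal{O}_X^{\oplus r^2}$ means the evaluation map $A\otimes_k\mathcal{O}_X\,\stackrel{\sim}{\longrightarrow}\,\mathcal{E}nd(E)$ is an isomorphism of sheaves of $\mathcal{O}_X$-algebras; evaluating at a closed point $x$ gives an isomorphism of $k$-algebras $A\,\stackrel{\sim}{\longrightarrow}\,\mathrm{End}(E_x)\,\cong\,M_r(k)$, so $A\,\cong\,M_r(k)$. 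Choosing matrix units $e_{ij}\in A$, the orthogonal idempotents $e_{ii}$ act on $E$ as idempotent bundle endomorphisms of fibrewise rank one; hence $L_i\,:=\,\mathrm{Im}(e_{ii})$ are line subbundles with $E\,=\,\bigoplus_i L_i$, and multiplication by $e_{i1}$ (with inverse $e_{1i}$) gives isomorphisms $L_1\,\stackrel{\sim}{\longrightarrow}\,L_i$. Setting $L\,:=\,L_1$ we obtain $E\,\cong\,L^{\oplus r}$.

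The one nonformal idea is to apply the criterion of \cite{HMP} to $\mathcal{E}nd(E)$ rather than to $E$: semistability on the invariant curves does \emph{not} make $E$ itself trivial on those curves, since the integers $a(C)$ encoding the twisting line bundle $L$ are nonzero in general, but it does kill exactly this twist inside $\mathcal{E}nd(E)$; the Peirce/idempotent decomposition then globalizes $L$. I expect no essential obstacle beyond the two inputs already in place — that invariant curves are rational, so Grothendieck splitting applies, and that all three sub-steps are valid over the arbitrary algebraically closed field $k$.
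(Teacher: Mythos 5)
Your proof is correct, and its first two steps coincide exactly with the paper's: Grothendieck's splitting \cite{Gr} plus semistability gives $f^*E\,\cong\,\mathcal{O}_C(a)^{\oplus r}$ on each invariant curve, and then the triviality criterion of \cite{HMP}, applied to the equivariant bundle $End(E)$, gives $End(E)\,\cong\,\mathcal{O}_X^{\oplus r^2}$. Where you genuinely diverge is the final reconstruction of $L$. The paper argues via eigenvalues: since all global functions on the complete variety $X$ are constant, the characteristic polynomial of any global endomorphism has constant coefficients; choosing $\phi\,\in\, H^0(X,\,End(E))$ with $r$ distinct eigenvalues yields an eigenspace decomposition $E\,=\,\bigoplus_{i=1}^r L_i$ into line bundles, hence $End(E)\,=\,\bigoplus_{i,j} L_j\otimes L_i^*$, and comparing this with $\mathcal{O}_X^{\oplus r^2}$ via Atiyah's Krull--Schmidt theorem \cite{At} forces $L_j\otimes L_i^*\,\cong\,\mathcal{O}_X$, i.e.\ all $L_i$ are isomorphic. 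You instead exploit the algebra structure: the evaluation map $H^0(X,\,End(E))\otimes_k\mathcal{O}_X\,\longrightarrow\, End(E)$ is an isomorphism of sheaves of algebras (both sides are trivial bundles and $H^0(X,\mathcal{O}_X)\,=\,k$), so the global endomorphism algebra is $M_r(k)$, and the Peirce decomposition by matrix units $e_{ii}$ splits $E\,=\,\bigoplus_i \mathrm{Im}(e_{ii})$ with the elements $e_{i1}$, $e_{1i}$ providing explicit mutually inverse isomorphisms between the summands. Your route buys self-containedness: it avoids both the Krull--Schmidt uniqueness theorem of \cite{At} and the genericity-of-eigenvalues argument (which implicitly also needs the evaluation isomorphism you make explicit), and it produces the isomorphisms $L_1\,\cong\, L_i$ concretely rather than by abstract cancellation of summands. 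The paper's route is shorter on the page because the heavy lifting is delegated to the cited theorem; both arguments are equally valid over an algebraically closed field of arbitrary characteristic.
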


\begin{proof}
The second statement in the proposition evidently implies the first statement.

Assume that the first statement in the proposition holds. Let
$End(E)\, =\, E\otimes E^*\, \longrightarrow\, X$ be the endomorphism
bundle. Take any
invariant curve $(C\, ,f)$. Since $C$ is isomorphic to ${\mathbb P}^1_k$,
the vector bundle $f^*E$ splits into a direct sum of line bundles
\cite{Gr}. Therefore, the given condition that $f^*E$ is semistable implies
that $f^*E$ is isomorphic to $\xi^{\oplus r}$ for some line bundle $\xi$
on $C$.
Hence the vector bundle $f^*End(E)\,=\, End(f^*E)$ is trivial. From this
it follows that the vector bundle $End(E)$ is trivial
\cite[p. 633, Theorem 6.4]{HMP}.

All functions on $X$ are constants. So for any global endomorphism
$\phi$ of $E$, the coefficients of the characteristic polynomial
of $\phi$ are constant functions. Hence the set of eigenvalues of $\phi_x
\, \in\, End(E_x)$ is independent of $x\, \in\, X$.

Let
$$
\phi\,\in\, H^0(X,\, End(E)) \, =\, k^{r^2}
$$
be a section such that the eigenvalues of $\phi$ are distinct.

The eigenspace decomposition of $\phi_x$, $x\,\in\, X$, produces a
decomposition
$$
E\,=\, \bigoplus_{i=1}^r L_i
$$
into a direct sum of line bundles. From this it follows that
\begin{equation}\label{e1}
End(E) \, =\, \bigoplus_{i,j=1}^r L_j\otimes L^*_i\, .
\end{equation}
On the other hand,
\begin{equation}\label{e2}
End(E) \, =\, {\mathcal O}^{\oplus r^2}_X\, .
\end{equation}
Comparing \eqref{e1} and \eqref{e2}, from \cite[p. 315, Theorem 2]{At}
we conclude that $L_j\otimes L^*_i\, =\, {\mathcal O}_X$ for all $i$ and $j$.
In other words, $L_i \,\cong \, L_j$ for all $1 \leq i\, ,j\,\leq r$.
Therefore, the second statement in the proposition holds.
\end{proof}

\begin{remark}
{\rm The isomorphism in statement (2) in Proposition \ref{prop1} is not necessarily
equivariant. To see this, take a line bundle $L\longrightarrow X$
with two (non-isomorphic) different toric actions; for instance, take $X\,=\,{\mathbb P}^1$,
$L\,=\,{\mathbb P}^1\times {\mathbb C}$, $\varphi_1((x,y),t)\,=\,(tx,y)$ and  $\varphi_2((x,y),t)
\,=\, (tx, ty)$. Denote $L_1, L_2$ the two
corresponding equivariant line bundles. Then $E=L_1\oplus L_2$
satisfies the statements in Proposition \ref{prop1}. However,
there is no equivariant line bundle $L'$ such that $E\cong L'\oplus L'$. Compare
with \cite[Corollary~1.2.4]{Kl}.}
\end{remark}

\section{Chern classes of equivariant bundles} \label{sec:5}

Let $X$ be a smooth complete complex toric variety of dimension
$d$ so that the boundary is a simple normal crossing divisor.
In \cite[Theorem 3.2.1]{Kl}, the Chern classes of an
equivariant vector bundle $E\longrightarrow X$ are computed through a resolution.
In this section we will compute the Chern classes of an equivariant vector bundle
$E$ through the natural logarithmic connection on $E$.
As the Newton classes $N_p(E)$ (the sum over
the $p$-th powers of the Chern roots of $E$) can be expressed in terms of the
residues of an integrable logarithmic connection (see \cite[Corollary
B.3]{EsVi}), we use such formula to compute the Chern character in our case.

We begin with introducing some notation. The fan associated to $X$ in $N\,\cong
\,\ZZ^d$ will be denoted by $\Delta$. The complex torus acting on $X$ is
${\mathcal T}\,=\,T_N\,=\,N\otimes \mathbb{C}^\ast$. As before, let ${
\mathcal U}\,\subset\,X$ be the dense open orbit of ${\mathcal T}$, and denote
 $$
 D\,=\, X \setminus {\mathcal U}\, ,
 $$
which is a normal crossing divisor. 

For any cone $\sigma \,\in\, \Delta$, we have an open subset
\begin{equation}\label{eq:open_subset}
{\mathcal U}_\sigma \,\cong \,\mathbb{C}^{k} \times (\mathbb{C}^\ast)^{d-k}\, ,
\end{equation}
where $k\,=\,\dim \sigma$. This open subset has a distinguished point $x_\sigma
\,\in\,
 {\mathcal U}_\sigma$ and its coordinates under the identification~\eqref{eq:open_subset} are
 $x_\sigma = (0,\stackrel{(k)}{\cdots},0,1,\stackrel{(d-k)}{\cdots},1)$. The
 orbit of $x_\sigma$ is denoted by $\mathcal{O}_\sigma = {\mathcal T} \cdot x_\sigma$. This is
 a torus of dimension~$d-k$. The stabilizer of such a point is the subgroup
  $$
  T_\sigma\,=\, \ker (T_N \to \mathcal{O}_\sigma) \,\subset\, T_N\, .
  $$
The closure of $\mathcal{O}_\sigma$ in $X$ will be denoted by $X_\sigma$; it
is also a toric variety. For one dimensional
 cone $\delta\in \Delta_1$, such subvarieties $X_\delta$ correspond to
divisors, and
\[
 D\,=\,\bigcup_{\delta\in \Delta_1} X_\delta\, .
\]

Let $E$ be an equivariant vector bundle of rank $r$ on $X$, and let $\nabla$ be
the integrable logarithmic connection on $E$ constructed in Proposition \ref{prop2}.
For any cone $\sigma\in \Delta$, the equivariant structure is given
locally by a representation $\rho_\sigma\,:\, T_\sigma \,\longrightarrow\,
\text{GL}\,(V)$ which extends to
$T_N$. Indeed, the action of ${\mathcal T}=T_N$ on $E$ produces an action 
of $T_N$ on $E|_{X_\sigma}$, reducing to an action of $T_\sigma$ on the fixed fiber, i.e.,
on $V$. 

The linear representation $\rho_\sigma$ splits into isotypical
components
\begin{equation}\label{eq:iso_comp}
V\,=\,\bigoplus_{\chi\in M} V^\sigma(\chi)\, ,
\end{equation}
where $\rho_\sigma(t)v\,=\,\chi(t)\cdot v$ for $v\,\in\,V^\sigma(\chi)$. Here
$M\,=\,\mathrm{Hom}_\ZZ(N,\ZZ)$ is the dual lattice of $N$.

Fix a $1$-dimensional cone $\delta\,\in\,\Delta$ generated by some $n
\,\in\, N$. As a result of~\eqref{eq:iso_comp} there is a basis $v_1,\ldots,v_r\,\in
\,V$ where $v_i\,\in\, V(\chi_i)$ and $\chi_i\,=\,\chi_{m_i}$, 
$m_i\,\in\, M$,
$i\,=\,1,\ldots, r$. The sections
$v_i(t)\,=\,\rho_\delta(t)\cdot v_i\,\in\, \Gamma({\mathcal U},E)$ satisfy
the condition $\nabla v_i\,=\,0$ and
extend to $\mathcal{U}_\delta={\mathcal U} \cup X_\delta$; here
we are using the alternative description of a logarithmic connection
(see \eqref{co-op}). These extended
sections are denoted by $v_i$.
 The section $s_i(t)\,=\,\chi_i(t)^{-1} v_i(t)\,\in\, \Gamma({\mathcal U} \cup X_\delta,E)$ is
 a non-vanishing one and
 \begin{equation}\label{eq:der_sect}
	 \nabla s_i\,=\,\frac{\text{d} \chi_i^{-1}}{\chi_i^{-1}}\, 
	 s_i\,=\, - \frac{\text{d}\chi_{m_i}}{\chi_{m_i}}\, s_i\, .
 \end{equation}
 gives the following local expression for $\nabla$
 \begin{equation}\label{eq:matrix_connection}
	 \omega_\delta\,=\,- \text{diag}\left(\frac{\text{d}\chi_{m_i}}{\chi_{m_i}}\, ; \,i=1,\ldots,r
	 \right).
 \end{equation}
 Let $\text{Res}_\delta\,:\,E(-\log D)\,\longrightarrow\, E|_{X_\delta}$ be the
Poincar\'e residue map, where
$E(-\log D) = E \otimes TX(-\log D)$, and let
  $$
  \Gamma_\delta\,=\,\text{Res}_\delta\circ\nabla
  $$
 be the function defined on~\cite[Appendix B]{EsVi}. Clearly $\Gamma_\delta(s_i)
\,=\, -\langle m_i,n \rangle
 \left( s_i|_{X_\delta} \right)$.

 Let $\Delta_1=\left\{ \delta_1,\ldots,\delta_s \right\}$ be the set of
 one-dimensional faces where the $i$-th face is generated by a primitive
 element $n_i\in N$. Recall the isotypical decomposition~\eqref{eq:iso_comp} associated
 to $\rho_{\delta_j}$ 
 \[
 V=\bigoplus_{i=1}^r V^j(\chi_{m_{i,j}}).
 \]
 Then,
 \begin{equation}\label{eq:gamma_comp}
	 \Gamma_{\delta_1}^{\alpha_1}\circ
	 \cdots\circ\Gamma_{\delta_s}^{\alpha_s}=(-1)^{\alpha_1+\ldots+\alpha_s}\text{diag}\left(
	 \prod_{j=1}^s \langle
	 m_{i,j},n_{j}\rangle^{\alpha_j}\,:\,i=1,\ldots,r
	 \right),
 \end{equation}
for $\alpha_j\geq 0$, $j=1,\ldots,s$.
 Now we apply the formula in~\cite[Cor. B.3]{EsVi},
 \[
 N_p(E)=(-1)^p\sum_{\alpha_1+\ldots+\alpha_s=p}\frac{p!}{\alpha_1!\cdots\alpha_s!}
 \text{Tr}\left(\Gamma_{\delta_1}^{\alpha_1}\circ
 \cdots\Gamma_{\delta_s}^{\alpha_s}  \right)[X_{\delta_1}]^{\alpha_1}\cdots
 [X_{\delta_s}]^{\alpha_s},
 \]
to compute the
Newton classes $N_p(E)$ of $E$, i.e, the sum of
$p$-th powers of the Chern roots.

 Using~\eqref{eq:gamma_comp} we have
 \begin{align*}
 N_p(E)&=\sum_{\alpha_1+\ldots+\alpha_s=p}\frac{p!}{\alpha_1!\cdots\alpha_s!}
 \left( \sum_{i=1}^r \left( \prod_{j=1}^s \langle
 m_{i,j},n_{j}\rangle^{\alpha_j}
 \right)[X_{\delta_1}]^{\alpha_1}\cdots[X_{\delta_s}]^{\alpha_s} \right)\\
 &=\sum_{i=1}^r \sum_{\alpha_1+\ldots+\alpha_s=p}
 \frac{p!}{\alpha_1!\cdots\alpha_s!} \left( (\langle
 m_{i,1},n_{1}\rangle [X_{\delta_1}])^{\alpha_1}\cdots (\langle
 m_{i,s},n_{s}\rangle [X_{\delta_s}])^{\alpha_s}\right)\\
 &=\sum_{i=1}^r \left( \sum_{j=1}^s \langle
 m_{i,j},n_{j}\rangle[X_j] \right)^p.
 \end{align*}
 Set
  \begin{equation}\label{dmn}
 d_{nm}\, =\, \dim V^{\delta}(\chi_m)\, ,
 \end{equation}
where $\delta=\mathbb{R}_+ \cdot n$. 
Then we enlarge the rank of summation as follows:
  \begin{equation}\label{eq:newton_poly}
	 N_p(E)=\sum_{m\in M} \left( \sum_{n\in \Delta_1 }d_{nm} \langle
	 m,n\rangle[X_{n}] \right)^p.
 \end{equation}
 where $X_n$ is the divisor associated to the one-dimensional face
 $\delta=\mathbb{R}_+\cdot n$.

 This gives the Chern character
 \[
 \text{ch}\,(E)=\sum_i \exp(\xi_i)= \sum_{p=0}^\infty \frac1{p!} N_p(E)\, ,
 \]
 where $\xi_i$ are the Chern roots. Here
 \begin{align*}
 \text{ch}(E)=&\sum_{p=0}^\infty \sum_{m\in M} \frac1{p!}\left( \sum_{n\in
 \Delta_1 } d_{nm}\langle
 m,n\rangle[X_n] \right)^p\\
 =&\sum_{m\in M} \exp\left( \sum_{n\in \Delta_1 } d_{nm}
 \langle m,n\rangle [X_n] \right)\\
 =&\sum_{m\in M} \prod_{n\in \Delta_1 }
 \exp( d_{nm} \langle m,n\rangle [X_n]).
 \end{align*}

 From~\eqref{eq:newton_poly}, one easily compute the Chern classes. For
 $m\,\in\, M$, define 
 $$
 L_m\,:=\,\sum_{n\in \Delta_1} d_{nm} \langle m,n\rangle \, [X_n]\, .
 $$
 Thus, $N_p(E)=\sum_{m\in M} L_m^p$ is 
 the sum of $p$-powers over $M$ which are by definition the Newton
 polynomials (note that this is a finite sum). Hence $L_m$ are the Chern roots.
Now it is straightforward to check that
 \begin{equation}\label{eq:Chern_class}
	 c_k(E)=\sum_{\substack{m_1,\ldots,m_k\in M\\m_i\neq m_j}} \prod_{i=1}^k
	 L_{m_i}=\sum_{\substack{m_1,\ldots,m_k\in M\\m_i\neq m_j}}
	 \prod_{i=1}^k \left(\sum_{n\in \Delta_1}  d_{n m_i} \langle
	 m_i,n\rangle\, [X_n]\right).
 \end{equation}

For $k=1$, the class $c_1(E)$ coincides with the one in \cite{Kl}. This is
done in \cite{Kl}, (3.2.4).

\end{document}